\numberwithin{equation}{section}
\def \beq {\begin{equation}}
\def \eeq {\end{equation}}
\def \ba {\begin{array}}
\def \ea {\end{array}}
\def \bs {\bigskip}
\def \ecart {\noalign{\medskip}}
\newenvironment{itemize*}
{\begin{itemize}
\setlength{\itemsep}{0.pt}
\setlength{\parskip}{0.pt}}
{\end{itemize}}
\def \dis {\displaystyle}
\renewcommand{\r}{\mathop{\rightarrow}}
\def\r{\rightarrow}
\renewcommand{\r}{\mathop{\rightarrow}}
\newcommand{\fdem}{\hspace*{\fill}~$\Box$\par\endtrivlist\unskip}
\def \div {\mbox{\rm div}\,}
\def \al {\alpha}
\def \ep {\varepsilon}
\def \la {\lambda}
\def \ph {\varphi}
\def \si {\sigma}
\newcommand{\N}{\mathbb{N}}     
\newcommand{\Z}{\mathbb{Z}}
\newcommand{\R}{\mathbb{R}}
\newcommand{\Q}{\mathbb{Q}}
\def \cD {\mathscr{D}}
\def \cE {\mathscr{E}}
\def \cI {\mathscr{I}}
\def \cM {\mathscr{M}}
\def \sfA {\mathsf{A}}
\def \sfC {\mathsf{C}}
\newenvironment{proof}[1]{\textit{Proof#1.\,}}{\fdem}
\newtheorem{atheo}{Theorem}[section]
\newtheorem{acor}{Corollary}[section]
\newtheorem{alem}{Lemma}[section]
\newtheorem{arem}{Remark}[section]
\newtheorem{Aexa}{Exemple}[section]
\newtheorem{apro}[alem]{Proposition}
\title{Asymptotics of ODE's flow on the torus through a singleton condition and a perturbation result. Applications}
\author{Marc Briane \& Lo\"\i c Herv\'e
\\*[.1em]
\normalsize Univ Rennes, INSA Rennes,  CNRS, IRMAR - UMR 6625, F-35000 Rennes, France
\\
\normalsize mbriane@insa-rennes.fr \& loic.herve@insa-rennes.fr
}
\begin{document}
\maketitle
\begin{abstract}
This paper deals with the long time asymptotics of the flow $X$ solution to the autonomous vector-valued ODE: $X'(t,x)=b(X(t,x))$ for  $t\in\R$, with $X(0,x)=x$ a point of the torus $Y_d$. We assume that the vector field $b$ reads as the product $\rho\,\Phi$, where $\rho$ is a non negative regular function and $\Phi$ is a non vanishing regular vector field in $Y_d$. In this work, the singleton condition means that the rotation set $\sfC_b$ composed of the average values of $b$ with respect to the invariant probability measures for the flow~$X$ is a singleton $\{\zeta\}$. This combined with Liouville's theorem regarded as a divergence-curl lemma, first allows us to obtain the asymptotics of the flow $X$, when $b$ is a nonlinear current field. Then, we prove a general perturbation result assuming that $\rho$ is the uniform limit in $Y_d$ of a positive sequence $(\rho_n)_{n\in\N}$ satisfying for any $n\in\N$, $\rho\leq\rho_n$ and $\sfC_{\rho_n\Phi}$ is a singleton $\{\zeta_n\}$. It turns out that the limit set $\sfC_b$ either remains a singleton, or enlarges to the closed line segment $[0_{\R^d},\lim_n\zeta_n]$ of~$\R^d$. We provide various corollaries of this perturbation result involving or not the classical ergodic condition, according to the positivity or not of some harmonic means of~$\rho$. These results are illustrated by different examples which highlight the alternative satisfied by the rotation set~$\sfC_b$. Finally, we prove that the singleton condition allows us to homogenize in any dimension the linear transport equation induced by the oscillating velocity $b(x/\ep)$ beyond any ergodic condition satisfied by the flow $X$.
\end{abstract}
\vskip .5cm\noindent
{\bf Keywords:} ODE's flow, asymptotics, perturbation, homogenization, conductivity equation, transport equation, invariant measure, rotation set, ergodic
\par\bs\noindent
{\bf Mathematics Subject Classification:} 34E10, 35B27, 37C10
\vskip 10.cm
\tableofcontents
\section{Introduction}
In this paper we study the large time asymptotics of the solution $X(\cdot,x)$ for $x\in\R^d$, to the~ODE
\beq\label{bXi}
\left\{\ba{ll}
\dis {\partial X\over\partial t}(t,x)=b(X(t,x)), & t\in\R
\\ \ecart
X(0,x)=x,
\ea\right.
\eeq
where $b$ is a $C^1$-regular vector field defined in the torus $Y_d:=\R^d\setminus\Z^d$ (denoted by $b\in C^1_\sharp(Y_d)^d$) according to the $\Z^d$-periodicity \eqref{fper}. The solution $X(\cdot,x)$ is well-defined for any $x\in Y_d$ by virtue of \eqref{XxperY}. More precisely, we focus on the existence of the limit of $X(t,x)/t$ as $t\to\infty$ for $x\in Y_d$.
This question naturally arises in ergodic theory, since it involves the {\em dynamic flow} $X$ induced by ODE \eqref{bXi}, and the Borel measures $\mu$ on the torus $Y_d$ which are {\em invariant for the flow} $X$, {\em i.e.}
\beq\label{invmu}
\forall\,t\in\R,\ \forall\,\psi\in C^0_\sharp(Y_d),\quad\int_{Y_d}\psi\big(X(t,y)\big)\,d\mu(y)=\int_{Y_d}\psi(y)\,d\mu(y).
\eeq
A strengthened variant of the famous Birkhoff ergodic theorem \cite[Theorem~2, Section~1.8]{CFS} claims that if the flow is {\em uniquely ergodic}, {\em i.e.} there exists a unique probability measure $\mu$ on $Y_d$ which is invariant for the flow, then any function~$f\in C^0_\sharp(Y_d)$ satisfies
\beq\label{Ttuninv}
\forall\,x\in Y_d,\quad \lim_{t\to\infty}\left[{1\over t}\int_0^t f(X(s,x))\,ds\right]=\int_{Y_d}f(y)\,d\mu(y),
\eeq
and the converse actually holds true.
In the particular case where $f=b$, limit \eqref{Ttuninv} yields
\beq\label{asyXi}
\forall\,x\in Y_d,\quad \lim_{t\to\infty}{X(t,x)\over t}=\int_{Y_d}b(y)\,d\mu(y).
\eeq
The unique ergodicity condition is a rather restrictive condition on the flow~\eqref{bXi}.
Alternatively, define the set
\beq\label{Ib}
\cI_b:=\big\{\mu\in\cM_p(Y_d): \mu\mbox{ invariant for the flow }X\big\},
\eeq
where $\cM_p(Y_d)$ is the set of probability measures on $Y_d$, and the subset of $\cI_b$
\beq\label{Eb}
\cE_b:=\big\{\mu\in\cI_b: \mu\mbox{ ergodic for the flow }X\big\}.
\eeq
It is known that the ergodic measures for the flow $X$ are the extremal points of the convex set $\cI_b$ so that
\beq\label{IbEb}
\cI_b={\rm conv}(\cE_b).
\eeq
Also define for any vector field $b\in C^1_\sharp(Y_2)^d$ the two following non empty subsets of $\R^d$:
\begin{itemize}
\item The set of all the limit points of the sequences $\big(X(n,x)/n\big)_{n\geq 1}$ for $x\in Y_d$ (denoted by $\rho_{\rm p}(b)$ in \cite{MiZi})
\beq\label{Ab}
\sfA_b:=\bigcup_{x\in Y_d}\left[\,\bigcap_{n\geq 1}\overline{\left\{{X(k,x)\over k}:k\geq n\right\}}\right].
\eeq
\item The so-called Herman \cite{Her} rotation set
\beq\label{Cb}
\sfC_b:=\left\{\int_{Y_2}b(y)\,d\mu(y):\mu\in\cI_b\right\}={\rm conv}\left\{\int_{Y_2}b(y)\,d\mu(y):\mu\in \cE_b\right\},
\eeq
which is compact and convex.
\end{itemize}
An implicit consequence of \cite[Theorem~2.4, Remark~2.5, Corollary~2.6]{MiZi} shows that
\beq\label{ACb}
\sfA_b\subset \sfC_b={\rm conv}(\sfA_b)\quad\mbox{and}\quad \# \sfA_b=1\Leftrightarrow \# \sfC_b=1,
\eeq
Note that by definition~\eqref{Ab} the equivalence of \eqref{ACb} can be written for any $\zeta\in \R^d$,
\beq\label{Cbsindis}
\sfC_b=\{\zeta\}\quad\Leftrightarrow\quad\forall\,x\in Y_d,\;\; \lim_{n\to\infty}{X(n,x)\over n}=\zeta\quad\Leftrightarrow\quad
\forall\,x\in Y_d,\;\; \lim_{t\to\infty}{X(t,x)\over t}=\zeta.
\eeq
\par\bigskip\noindent
In the sequel the ``singleton condition'' means that Herman's rotation set $\sfC_b$ is a singleton $\{\zeta\}$.
Proposition~\ref{pro.Cb} below provides an alternative proof of \eqref{Cbsindis}.
Then, the ``singleton approach'' consists in establishing sufficient conditions on the vector field $b$ to ensure the singleton condition.
The aim of this paper is to exploit this approach either to get the asymptotics of the flow $X$ under suitable vector fields $b$, or in the less favorable cases to determine Herman's rotation set $\sfC_b$ as a closed line segment of $\R^d$.
\par
First of all, revisiting Liouville's theorem for invariant probability measures (see, {\em e.g.}, \cite[Theorem~1, Section~2.2]{CFS}) as a divergence-curl lemma (see Proposition~\ref{pro.divcurl}) we obtain (see Proposition~\ref{pro.FDv}) a rather surprising null asymptotics of the flow $X$ associated with a nonlinear current field of type $b=F(\cdot,\nabla v)$, where $F(x,\xi)$ is a vector-valued function in $C^1_\sharp(Y_d;C^1(\R^d))^d$ which is strictly monotonic with respect to variable $\xi$, and $v$ is a scalar potential in $C^2_\sharp(Y_d)$.
\par
Actually, except the one-dimensional case where $b$ is parallel to a fixed direction so that the flow can be computed explicitly (see Example~\ref{exa.1}), there are very few examples of vector field $b$ for which the asymptotics of the flow \eqref{bXi} is completely known in dimension $d\geq 2$.
There are at least two cases:
\begin{itemize}
\item If $b$ is a non vanishing regular field in dimension two, Peirone~\cite[Theorem~3.1]{Pei} has proved that the asymptotics of the flow $X(\cdot,x)$ does exist at each point of $x\in Y_2$. Moreover, the singleton condition is satisfied under the extra ergodic condition that for any $x\in Y_2$, the flow $X(\cdot,x)$ is not periodic in $Y_2$ according to \eqref{XperYd}.
\item Under the global rectification condition $\nabla\Psi\,b=\zeta$ in $Y_d$, where $\Psi$ is a $C^2$-diffeomorphism on the torus $Y_d$ and $\zeta$ is a non zero constant vector in $\R^d$, the set $\sfC_b$ is the singleton $\big\{(\int_{Y_d}\nabla\Psi)^{-1}\zeta\big\}$ in any dimension (see \cite[Corollary~4.1]{Bri2} and Remark~\ref{rem.sta}).
\end{itemize}
\par
In the two former situations the vector field $b$ does not vanish. In order to extend these two results among others to a vanishing vector field $b$, a natural question is to know if the singleton condition is stable under a uniform non vanishing perturbation $b_n$ of $b$ for $n\in\N$. We provide a partial answer to this question with the main result Theorem~\ref{thm.Cbsin} of the paper. Restricting ourselves to a perturbation $b_n=\rho_n\,\Phi$, where $(\rho_n)_{n\in\N}$ is a sequence of positive functions in $C^1_\sharp(Y_d)$ converging uniformly in $Y_d$ to some regular function $\rho\leq\rho_n$ and where $\Phi$ is a fixed vector field, we prove that if $\sfC_{b_n}=\{\zeta_n\}$ for any $n\in\N$, then the sequence $(\zeta_n)_{n\in\N}$ converges to some $\zeta\in\R^d$. Moreover, we get that the limit set $\sfC_b$ is the singleton $\{\zeta\}$ if $\rho$ is positive, and that $\sfC_b$ is the closed line segment $[0_{\R^d},\zeta]$ if $\rho$ is only non negative.
In Theorem~\ref{thm.Cbsin} it is essential that the vector field $\Phi$ in $b_n=\rho_n\,\Phi$ is independent of $n$, otherwise the perturbation result does not hold in general  (see Remark~\ref{rem.Phindep} and Example~\ref{exa.1}). Moreover, the two-dimensional Example~\ref{exa.2} shows that the sequence of singletons $(\sfC_{b_n})_{n\in\N}$ may be actually enlarged to the limit closed line segment $\sfC_b=[0_{\R^d},\zeta]$ with $\zeta\neq 0$.
From the perturbation result we first deduce (see Corollary~\ref{cor.AbCb}) that for a fixed vector field $\Phi\in C^1_\sharp(Y_d)^d$, the set of the positive functions $\rho\in C^1_\sharp(Y_d)$ with $\#\sfC_{\rho\Phi}=1$ is closed for the uniform convergence in $C^1_\sharp(Y_d)^d$. This result does not extend to the larger set composed of the non negative functions $\rho$ as shown in Remark~\ref{rem.AbCb}. 
Then, we prove various corollaries of Theorem~\ref{thm.Cbsin} in terms of the asymptotics of the ODE's flow \eqref{bXi}:
\begin{itemize}
\item We show (see Corollary~\ref{cor.dist-orbite}) that the asymptotics of the flow $X(\cdot,x)$ associated with $b=\rho\,\Phi$ does exist at any point $x$ such that the orbit $X(\R,x)$ is far enough from the set $\{\rho=0\}$.
\item When the flow associated with the vector field $\Phi$ admits an invariant probability measure with a positive density $\sigma\in C^1_\sharp(Y_d)$ with respect to Lebesgue's measure, we prove (see Corollary~\ref{cor.siPhidiv0}) the following alternative:
\begin{itemize}
\item $\sfC_{\rho\Phi}=\{0_{\R^d}\}$ if the harmonic mean of $\rho/\sigma$ is equal to $0$,
\item $\sfC_{\rho\Phi}$ is some closed line segment $[0_{\R^d},\zeta]$ of $\R^d$ if the harmonic mean of $\rho/\sigma$ is positive (see the two-dimensional Example~\ref{exa.2}).
\end{itemize}
\item As a by-product of Corollary~\ref{cor.siPhidiv0} we determine (see Corollary~\ref{cor.CbP}) the set $\sfC_{\rho\Phi}$ in dimension two when $\Phi$ is parallel to an orthogonal gradient satisfying an ergodic condition, extending Peirone's result \cite[Theorem~3.1]{Pei} (see Remark~\ref{rem.SP}) to the case where the vector field $b$ does vanish. Corollary~\ref{cor.CbP} is illustrated in Example~\ref{exa.4} by the case where $b$ is a two-dimensional electric field, while dimension three is shown to be quite different. Similarly, we extend (see Corollary~\ref{cor.Cbdet}) the non ergodic case of \cite[Corollary~4.1]{Bri2} to a vanishing vector field $b$ in any dimension (see Example~\ref{exa.3}).
\end{itemize}
It turns out that the singleton approach cannot be regarded exclusively as an ergodic approach. It may contain an ergodic condition as in Corollary~\ref{cor.CbP}. But it may be also independent of any ergodic condition as in Corollary~\ref{cor.Cbdet}. This does make this approach an original alternative to the classical ergodic approach.
\par
Finally, we apply the asymptotics of the flow \eqref{bXi} to the homogenization of the linear transport equation with an oscillating velocity 
\beq\label{traequ}
{\partial u_\ep\over\partial t}(t,x)+b\left({x\over\ep}\right)\cdot\nabla_x u_\ep(t,x)=0\quad\mbox{for }(t,x)\in[0,\infty)\times\R^d,
\eeq
where the vector field $b$ belongs to $C^1_\sharp(Y_d)^d$. Tartar \cite{Tar} and Amirat {\em et al} \cite{AHZ1,AHZ2,AHZ3} showed that in general the homogenization of equation~\eqref{traequ} leads to a nonlocal limit problem. Here, we focus on the cases where the homogenized equation remains a linear transport equation with some average velocity $\langle b\rangle$ of the vector field $b$. In this perspective, the following works may be quoted:
First, assuming that $b$ is divergence free and the flow associated with $b$ is ergodic, Brenier \cite{Bre} proved the convergence of the solution $u_\ep$ in any dimension. This result was extended by Golse \cite[Theorem~8]{Gol1} (see also \cite{Gol2}) for a more general velocity $b(x,x/\ep)$ with ${\rm div}_yb(x,\cdot)=0$, assuming the ergodicity of the flows associated with the vector fields $b(x,\cdot)$. Hou and Xin~\cite{HoXi} performed the homogenization of \eqref{traequ} in dimension two with an oscillating initial condition $u_\ep(0,x)=u^0(x,x/\ep)$, assuming that $b$ is a non vanishing divergence free vector field in $\R^2$ and that the flow~\eqref{bXi} is ergodic. To this end, they used a two-scale convergence approach combined with Kolmogorov's theorem \cite{Kol} involving some rotation number.
This two-scale approach based on the divergence free condition was extended by Jabin and Tzaveras \cite{JaTz} using a kinetic decomposition in the two-scale procedure.
Moreover, Tassa~\cite{Tas} extended the two-dimensional homogenization result of \cite{HoXi}, assuming that the flow $X$ associated with $b$ has an invariant probability measure with a positive regular density with respect to Lebesgue's measure. More generally, Peirone~\cite{Pei} proved the convergence of the solution $u_\ep$ to equation \eqref{traequ} in dimension two, under the sole assumption that $b$ does not vanish in~$Y_2$. More recently, the first author proved in \cite[Corollary~4.4]{Bri2} (see also~\cite{Bri3} for an extension to the non periodic case) the homogenization of \eqref{traequ}, replacing the classical ergodic condition by the rectification condition $\nabla\Psi\,b=\zeta$ for some $C^2$-diffeomorphism $\Psi$ on~$Y_d$, and a non null constant vector $\zeta\in\R^d$.
\par
In the present case, extending the previous results in the case where the initial condition $u_\ep(0,\cdot)$ does not oscillate, we prove a new result (see Theorem~\ref{thm.hom}) on the homogenization of the linear transport equation \eqref{traequ} in any dimension, only assuming the singleton condition
(without therefore assuming that the velocity of transport equation~\eqref{traequ} is divergence free).
The results of Section~\ref{s.ergres} and Section~\ref{s.newres} provide various and rather general situations where the singleton condition applies:
\begin{itemize*}
\item the case of the nonlinear current field in Proposition~\ref{pro.FDv},
\item some of the cases of Corollary~\ref{cor.siPhidiv0} and Corollary~\ref{cor.CbP},
\item Corollary~\ref{cor.Cbdet} illustrated by Example~\ref{exa.3},
\item Example~\ref{exa.2} when $\alpha\geq 1$,
\item the two-dimensional conductivity case of Example~\ref{exa.4} under the ergodic condition~\eqref{ergA*la}.
\end{itemize*}
\par
The paper is organized as follows. In Section~\ref{s.ergres} we revisit the singleton condition and the Liouville theorem, from which we deduce the asymptotics of the flow~\eqref{bXi} when $b$ is a current field. In Section~\ref{s.newres} we establish the perturbation Theorem~\ref{thm.Cbsin} which is the main result of the paper, and we derive various corollaries on the set $\sfC_b$ and on the asymptotics of the flow~$X$.
Section~\ref{s.exa} presents four examples which illustrate the results of Section~\ref{s.ergres} and Section~\ref{s.newres}.
Section~\ref{s.hom} is devoted to the homogenization of the transport equation~\eqref{traequ} in connection with the asymptotics of the flow.
\par\noindent
\subsection*{Notation}
\begin{itemize}
\item $(e_1,\dots,e_d)$ denotes the canonical basis of $\R^d$, and $0_{\R^d}$ denotes the null vector of $\R^d$.
\item $|\cdot|$ denotes the euclidian norm in $\R^N$.
\item $Y_d$ for $d\geq 1$, denotes the $d$-dimensional torus $\R^d/\Z^d$, which is identified to the cube $[0,1)^d$ in $\R^d$.
\item $C^k_c(\R^d)$ for $k\in\N\cup\{\infty\}$, denotes the space of the real-valued functions in $C^k(\R^d)$ with compact support.
\item $C^k_\sharp(Y_d)$ for $k\in\N\cup\{\infty\}$, denotes the space of the real-valued functions $f\in C^k(\R^d)$ which are $\Z^d$-periodic, {\em i.e.}
\beq\label{fper}
\forall\,\kappa\in\Z^d,\ \forall\,x\in \R^d,\quad f(x+\kappa)=f(x).
\eeq
\item $L^p_\sharp(Y_d)$ for $p\geq 1$, denotes the space of the real-valued functions in $L^p_{\rm loc}(\R^d)$ which are $\Z^d$-periodic.
\item  $\cM(\R_d)$, resp. $\cM(Y_d)$, denotes the space of the Radon measures on $\R^d$, resp. $Y_d$, and $\cM_p(Y_d)$ denotes the space of the probability measures on $Y_d$.
\item The notation $\cI_b$ in \eqref{Ib} will be used throughout the paper.
\item $\cD'(\R^d)$ denotes the space of the distributions on $\R^d$.
\item Let $a$ be a non negative function in $L^1_\sharp(Y_d)$, the arithmetic mean $\overline{a}$ and the harmonic mean $\underline{a}$ of $a$ are defined by
\[
\overline{a}:=\int_{Y_d}a(y)\,dy\quad\mbox{and}\quad \underline{a}:=\left(\int_{Y_d}{dy\over a(y)}\right)^{-1}.
\]
\end{itemize}
\subsection*{Definitions and recalls}
Let $b:\R^d\to\R^d$ be a vector-valued function in $C^1_\sharp(Y_d)^d$.
Consider the dynamical system
\beq\label{bX}
\left\{\ba{ll}
\dis {\partial X\over\partial t}(t,x)=b(X(t,x)), & t\in\R
\\ \ecart
X(0,x)=x\in\R^d.
\ea\right.
\eeq
The solution $X(\cdot,x)$ to \eqref{bX} which is known to be unique (see, {\em e.g.}, \cite[Section~17.4]{HSD}) induces the dynamic flow $X$ defined by
\beq\label{X}
\ba{lrll}
X: & \R\times \R^d & \to & \R^d
\\ \ecart
& (t,x) & \mapsto & X(t,x),
\ea
\eeq
which satisfies the semi-group property
\beq\label{sgroup}
\forall\,s,t\in\R,\ \forall\,x\in \R^d,\quad X(s+t,x)=X(s,X(t,x)).
\eeq
The flow $X$ is actually well defined in the torus $Y_d$, since
\beq\label{XxperY}
\forall\,t\in\R,\ \forall\,x\in \R^d,\ \forall\,\kappa\in\Z^d,\ \quad X(t,x+\kappa)=X(t,x)+\kappa.
\eeq
Property~\eqref{XxperY} follows immediately from the uniqueness of the solution $X(\cdot,x)$ to \eqref{bX} combined with the $\Z^d$-periodicity of $b$.
\par
For any $x\in Y_d$, the solution $X(\cdot,x)$ to \eqref{bX} is said to be {\em periodic in the torus} $Y_d$ if there exist $T>0$ and $\kappa\in\Z^d$ such that
\beq\label{XperYd}
\forall\,t\in\R,\quad X(t+T,x)=X(t,x)+\kappa.
\eeq
If $\kappa=0_{\R^d}$ the solution is said to be {\em periodic in $\R^d$}.
\section{Some variants of classical ergodicity results}\label{s.ergres}
\subsection{The singleton result}\label{ss.appsin}
Equivalences \eqref{Cbsindis} have been obtained in \cite{MiZi} as a consequence of the so-called ergodic decomposition theorem. Here, we provide a simpler and more direct proof  of \eqref{Cbsindis}, which is based on Proposition~\ref{pro.invmeas} (in the Appendix) only involving the weak-$*$ compactness of $\cM_p(Y_d)$.
Also note that the uniform convergence result below is mentioned in \cite[Section~9]{Her} with no reference.
\begin{apro}\label{pro.Cb}
Let $b\in C^1_\sharp(Y_d)^d$. Then, the following equivalence holds for any $\zeta\in\R^d$,
\beq\label{Cbsin}
\ba{lll}
\sfC_b=\{\zeta\} & \Leftrightarrow & \dis \forall\,x\in Y_d,\;\;\lim_{t\to\infty}{X(t,x)\over t}=\zeta
\\ \ecart
& \Leftrightarrow & \dis {X(t,\cdot)\over t}\mbox{ converges uniformly as $t\to\infty$ to $\zeta$ on $Y_d$}.
\ea
\eeq
\end{apro}
\begin{proof}{ of Proposition~\ref{pro.Cb}}
First, assume that $\sfC_b=\{\zeta\}$. Assume by contradiction that the second right hand-side of \eqref{Cbsin} does not hold.
Then, there exists $\ep>0$ such that for any $n\in\N$, there exist a number $r_n>n$ and a point $y_n\in Y_d$ satisfying 
\beq\label{rnynzeep}
\left|\,{X(r_n,y_n)-y_n\over r_n}-\zeta\,\right|=\left|\,\frac{1}{r_n} \int_0^{r_n} b(X(s,y_n))\,ds-\zeta\,\right|\geq \ep.
\eeq
Now, let $\nu_n$ for $n\in\N$, be the probability measure on $Y_d$ defined by
\beq\label{nunf}
\int_{Y_d} f(y)\, d\nu_n(y) = \frac{1}{r_n} \int_0^{r_n} f(X(s,y_n))\,ds\quad\mbox{for }f\in C^0_\sharp(Y_d)^d.
\eeq
By virtue of Lemma~\ref{lem-extrac} there exists a subsequence $(\nu_{n_k})_{k\in\N}$ of $(\nu_n)_{n\in\N}$ which converges weakly~$*$ to some probability measure $\mu\in \cM_p(Y_d)$ which is invariant for the flow $X$.
Hence, passing to the limit as $n_k\to\infty$ both in \eqref{rnynzeep} and \eqref{nunf} with $f:=b$, we deduce from $\sfC_b=\{\zeta\}$ that
\[
|\zeta-\zeta|=\left|\,\int_{Y_d} b(y)\, d\mu(y)-\zeta\,\right|\geq \ep>0,
\]
which yields a contradiction.
\par\noindent
It is clear that the second right-hand side of \eqref{Cbsin} implies the first one.
\par
Finally, let us assume that the first right-hand side of \eqref{Cbsin} holds true with $\zeta$, and let us prove that $\sfC_b=\{\zeta\}$.
We thus have
\[
\forall\,x\in Y_d,\quad \lim_{t\to\infty}\left({1\over t}\int_0^t b(X(s,x))\,ds\right)=\zeta.
\]
Then, integrating over $Y_d$ the former equality with respect to any probability measure $\mu\in\cI_b$, then applying successively Lebesgue's dominated convergence theorem and Fubini's theorem, we get that
\[
\ba{ll}
\zeta & \dis =\lim_{t\to\infty}\int_{Y_d}\left({1\over t}\int_0^t b(X(s,x))\,ds\right)d\mu(x)
\\ \ecart
& \dis =\lim_{t\to\infty}{1\over t}\int_0^t\left(\int_{Y_d} b(X(s,x))\,d\mu(x)\right)ds
=\int_{Y_d} b(x)\,d\mu(x),
\ea
\]
which shows that $\sfC_b=\{\zeta\}$. This concludes the proof of \eqref{Cbsin}.
\end{proof}
\begin{arem}\label{rem.sta}
Stability of the singleton condition by a diffeomorphism on the torus.
\par\noindent
A mapping $\Psi\in C^1(\R^d)^d$ is said to be a $C^1$-diffeomorphism on $Y_d$ if $\Psi$ satisfies the following conditions:
\begin{itemize}
\item $\det(\nabla\Psi(x))\neq 0$ for any $x\in\R^d$,
\item there exist a matrix $A\in\Z^{d\times d}$ with $|\det(A)|=1$, and a mapping $\Psi_\sharp\in C^1_\sharp(Y_d)^d$ such that
\beq\label{PsiAPsid}
\forall\, x\in\R^d, \quad \Psi(x)=Ax+\Psi_\sharp(x).
\eeq
\end{itemize}
Note that the invertibility of $A$ and the $\Z^d$-periodicity of $\Psi_\sharp$ in \eqref{PsiAPsid} imply that $\Psi$ is a proper function ({\em i.e.}, the inverse image by the function of any compact set in $\R^d$ is a compact set). Hence, by virtue of Hadamard-Caccioppoli's theorem \cite{Cac} (also called Hadamard-L\'evy's theorem) the mapping $\Phi$ is actually a $C^1$-diffeomorphism on $\R^d$.
Also note that due to $A^{-1}\in\Z^{d\times d}$, we have
\[
\forall\,\kappa\in\Z^d,\ \forall\,x\in Y_d,\quad
\left\{\ba{ll}
\Psi(x+\kappa)-\Psi(x)=A\kappa & \in\Z^d
\\ \ecart
\Psi^{-1}(x+\kappa)-\Psi^{-1}(x)=A^{-1}\kappa & \in\Z^d,
\ea\right.
\]
hence $\Psi$ well defines an isomorphism on the torus.
\par
Now, let $b$ be a vector field in $C^1_\sharp(Y_d)^d$ and let $\Psi$ be a $C^2$-diffeomorphism on $Y_d$. Define the flow $\tilde{X}$ obtained from $\Psi$ by
\beq\label{tX}
\tilde{X}(t,x):=\Psi\big(X(t,\Psi^{-1}(x))\big)\quad\mbox{for }(t,x)\in \R\times Y_d.
\eeq
Using the chain rule it is easy to check that the mapping $\tilde{X}$ is the flow associated with the vector field $\tilde{b}\in C^1_\sharp(Y_d)^d$ defined by
\beq\label{tb}
\tilde{b}(x)=\nabla\Psi(\Psi^{-1}(x))\,b(\Psi^{-1}(x))\quad\mbox{for }x\in Y_d.
\eeq
Combining \eqref{PsiAPsid} and \eqref{tX} we clearly have
\[
\forall\,x\in Y_d,\quad\lim_{t\to\infty}{\tilde{X}(t,x)\over t}\mbox{ exists}\;\Leftrightarrow\;\;\lim_{t\to\infty}{X(t,\Psi^{-1}(x))\over t}\mbox{ exists},
\]
and in the case of existence of the limit for a given $x\in Y_d$, we get the equality
\[
\lim_{t\to\infty}{\tilde{X}(t,x)\over t}=A\left(\lim_{t\to\infty}{X(t,\Psi^{-1}(x))\over t}\right).
\]
This combined with equivalence \eqref{Cbsin} implies that
\beq\label{CbCtb}
\# \sfC_{\tilde{b}}=1\;\Leftrightarrow\; \# \sfC_b=1,
\eeq
and in this case we obtain the equality $\sfC_{\tilde{b}}=A\,\sfC_b$.
Therefore, the singleton condition is stable by any $C^2$-diffeomorphism on $Y_d$.
\par
In particular, such a diffeomorphism has been used by Tassa \cite{Tas} in dimension two, assuming that the first coordinate $b_1$ of $b$ does not vanish in $Y_2$ and that there exists an invariant probability measure for the flow associated with $b$ having a density $\sigma\in C^1_\sharp(Y_2)$ with respect to Lebesgue's measure. In this case a variant \cite[Theorem~2.3]{Tas} of Kolmogorov's theorem \cite{Kol} (which holds under the weaker assumption that $b$ is non vanishing) provides a diffeomorphism on $Y_2$ which rectifies the vector field $b$ to a vector field $\tilde{b}=a\,\xi$ with a positive function $a\in C^1_\sharp(Y_2)$ and a fixed direction $\xi\in\R^d$. Under the additional ergodic assumption that the coordinates of $\xi$ are rationally independent, the singleton condition is shown to be satisfied \cite[Theorem~4.2]{Tas}. Corollary~\ref{cor.CbP} below provides a more general result in dimension two with a vanishing vector field $b$, without using  a rectification of the field $b$.
\end{arem}
\subsection{A divergence-curl result}
Liouville's theorem provides a criterium for a probability measure on a smooth compact manifold in $\R^d$ (see, {\em e.g.}, \cite[Theorem 1, Section~2.2]{CFS}) to be invariant for the flow.
The next result revisits this theorem in $\cM_p(Y_d)$ in association with a divergence-curl result on the torus.
\begin{apro}\label{pro.divcurl}
Let $b\in C^1_\sharp(Y_d)^d$ and let $\mu\in\cM_p(Y_d)$. Define the Borel measure $\tilde{\mu}$ on~$\R^d$ by
\beq\label{tmu}
\int_{\R^d}\ph(x)\,d\tilde{\mu}(x):=\int_{Y_d} \ph_\sharp(y)\,d\mu(y)\quad\mbox{where}\quad \ph_\sharp(\cdot):=\sum_{\kappa\in\Z^d}\ph(\cdot+\kappa)\quad\mbox{for }\ph\in C^0_c(\R^d).
\eeq
Then, the three following assertions are equivalent:
\begin{enumerate}[$(i)$]
\item $\mu$ is invariant for the flow $X$, {\em i.e.} \eqref{invmu} holds,
\item $\tilde{\mu}\,b$ is divergence free in $\R^d$, {\em i.e.}
\beq\label{dtmub=0}
\div(\tilde{\mu}\,b)=0\quad\mbox{in }\cD'(\R^d),
\eeq
\item $\mu\,b$ is divergence free in $Y_d$, {\em i.e.}
\beq\label{dmub=0}
\forall\,\psi\in C^1_\sharp(Y_d),\quad \int_{Y_d} b(y)\cdot\nabla\psi(y)\,d\mu(y)=0.
\eeq
\end{enumerate}
\end{apro}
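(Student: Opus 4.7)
The plan is to establish the chain of equivalences $(i)\Leftrightarrow(iii)\Leftrightarrow(ii)$, handling the two halves independently. The equivalence $(ii)\Leftrightarrow(iii)$ is a straightforward translation via the periodization \eqref{tmu}, while $(i)\Leftrightarrow(iii)$ is the substantive content (a Liouville-type identity stating that invariance of $\mu$ is equivalent to $\mu b$ being weakly divergence-free on the torus).

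First I would address $(ii)\Leftrightarrow(iii)$. Testing $\div(\tilde\mu\,b)=0$ against $\ph\in C^1_c(\R^d)$ and using the $\Z^d$-periodicity of $b$ together with the definition of $\tilde\mu$ gives
\[
\int_{\R^d} b\cdot\nabla\ph\,d\tilde\mu=\int_{Y_d}\bigl(b\cdot\nabla\ph\bigr)_\sharp\,d\mu=\int_{Y_d} b(y)\cdot\nabla(\ph_\sharp)(y)\,d\mu(y),
\]
since differentiation commutes with the sum defining $\ph_\sharp$. The surjectivity of the map $\ph\mapsto\ph_\sharp$ from $C^1_c(\R^d)$ onto $C^1_\sharp(Y_d)$ (obtained by writing any $\psi\in C^1_\sharp(Y_d)$ as $(\chi\psi)_\sharp$ with a fixed partition-of-unity cutoff $\chi\in C^\infty_c(\R^d)$ satisfying $\chi_\sharp\equiv 1$) then yields the equivalence.

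Next, for $(i)\Rightarrow(iii)$, I would simply differentiate the invariance identity \eqref{invmu} at $t=0$ against any $\psi\in C^1_\sharp(Y_d)\subset C^0_\sharp(Y_d)$. Dominated convergence (valid since $\mu\in\cM_p(Y_d)$ and $\nabla\psi$, $b$ are bounded on $Y_d$) combined with $\partial_t X(0,y)=b(y)$ yields exactly \eqref{dmub=0}.

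The main step is the converse $(iii)\Rightarrow(i)$, and the key idea is to exploit the semi-group property \eqref{sgroup} to reduce infinitesimal invariance at arbitrary time $s$ to the hypothesis \eqref{dmub=0}. Fix $\psi\in C^1_\sharp(Y_d)$ and set $f(t):=\int_{Y_d}\psi(X(t,y))\,d\mu(y)$. For any $s\in\R$, define $\Psi_s(z):=\psi(X(s,z))$; by \eqref{XxperY} and the $\Z^d$-periodicity of $\psi$, $\Psi_s$ lies again in $C^1_\sharp(Y_d)$. Writing $\psi(X(s+h,y))=\Psi_s(X(h,y))$ via \eqref{sgroup} and differentiating at $h=0$, one gets
\[
f'(s)=\int_{Y_d}\nabla\Psi_s(y)\cdot b(y)\,d\mu(y)=0
\]
by hypothesis $(iii)$ applied to $\Psi_s$. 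Hence $f$ is constant on $\R$, so $f(t)=f(0)$ for every $t\in\R$. Density of $C^1_\sharp(Y_d)$ in $C^0_\sharp(Y_d)$ for uniform convergence then upgrades this to all test functions in $C^0_\sharp(Y_d)$, giving \eqref{invmu}. The main (mild) obstacle will be justifying the differentiation under the integral sign at arbitrary time $s$ and checking that $\Psi_s$ genuinely belongs to $C^1_\sharp(Y_d)$, both of which follow from the $C^1$-regularity of the flow in both arguments and from \eqref{XxperY}.
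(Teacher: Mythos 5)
Your proof is correct. The decomposition $(ii)\Leftrightarrow(iii)$ via periodization is the same as the paper's, relying on $[\,b\cdot\nabla\ph\,]_\sharp = b\cdot\nabla\ph_\sharp$ and the surjectivity of $\ph\mapsto\ph_\sharp$ (the paper's Lemma~\ref{lem.phd}). For the substantive equivalence, however, you pick the other leg of the triangle: you prove $(i)\Leftrightarrow(iii)$ directly on the torus, whereas the paper proves $(i)\Leftrightarrow(ii)$ by working with $\tilde\mu$ on $\R^d$ and then converting back via periodization. In the hard direction the underlying computation is the same in both cases: the semi-group property \eqref{sgroup} plus differentiation under the integral reduces invariance to an integral of $b\cdot\nabla(\cdot)$ against the measure, which vanishes by hypothesis. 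The paper phrases this as first deriving the transport PDE $\partial_t\big(\ph\circ X(t,\cdot)\big)=b\cdot\nabla_x\big(\ph\circ X(t,\cdot)\big)$ and then integrating in $x$ against $\tilde\mu$, while you write it as $f'(s)=\int_{Y_d}\nabla\Psi_s\cdot b\,d\mu$ with $\Psi_s=\psi\circ X(s,\cdot)$; these are the same derivative. Your route is slightly more economical because it avoids a second pass through the periodization lemma in the proof of $(iii)\Rightarrow(i)$ (you only need the density of $C^1_\sharp(Y_d)$ in $C^0_\sharp(Y_d)$ at the very end, which is elementary mollification), but conceptually the two arguments are the same.
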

\begin{proof}{ of Proposition~\ref{pro.divcurl}}
\hfill
\par\smallskip\noindent
{\it Proof of $(i)\Rightarrow(ii)$.}
Assume that $\mu$ is invariant for the flow, {\em i.e.} \eqref{invmu}.
Let $\ph\in C^1_c(\R^d)$. Since by \eqref{XxperY} we have for any $t\in\R$ and $y\in \R^d$,
\beq\label{phdph}
\big[\ph(X(t,\cdot))\big]_\sharp(y)=\sum_{\kappa\in\Z^d}\ph(X(t,y+\kappa))=\sum_{\kappa\in\Z^d}\ph(X(t,y)+\kappa)=\ph_\sharp(X(t,y)),
\eeq
it follows from \eqref{tmu} and the invariance of $\mu$ that
\[
\ba{ll}
\forall\,t\in\R, & \dis \int_{\R^d} \ph(X(t,x))\,d\tilde{\mu}(x)=\int_{Y_d} \big[\ph(X(t,\cdot))\big]_\sharp(y)\,d\mu(y)=\int_{Y_d} \ph_\sharp(X(t,y))\,d\mu(y)=
\\*[1.em]
& \dis \int_{Y_d} \ph_\sharp(y)\,d\mu(y)=\int_{\R^d} \ph(x)\,d\tilde{\mu}(x).
\ea
\]
Taking the derivative of the former expression with respect to $t$, we get that
\[
\forall\,t\in\R,\quad \int_{\R^d} b(X(t,x))\cdot\nabla\ph(X(t,x))\,d\tilde{\mu}(x)=0,
\]
which at $t=0$ yields
\beq\label{dtmub=02}
\forall\,\ph\in C^1_c(\R^d),\quad \int_{\R^d}b(x)\cdot\nabla\ph(x)\,d\tilde{\mu}(x)=0,
\eeq
namely the variational formulation of the distributional equation \eqref{dtmub=0}.
\par\medskip\noindent
{\it Proof of $(ii)\Rightarrow(i)$.}
Conversely, assume that equation \eqref{dtmub=0} holds true, and let us prove that $\mu$ is invariant for the flow $X$. Let $\ph\in C^1_c(\R^d)$ and define the function $\phi\in C^1(\R\times\R^d)$ by $\phi(t,x):=\ph(X(t,x))$.
By the semi-group property \eqref{sgroup} we have for any $s,t\in\R$ and $x\in\R^d$,
\[
\ba{ll}
\dis {\partial\over\partial s}\big(\phi(s+t,X(-s,x))\big) & \dis ={\partial\over\partial s}\big(\phi(t,x)\big)=0
\\ \ecart
& \dis ={\partial\phi\over\partial s}(s+t,X(-s,x))-b(X(-s,x))\cdot\nabla_x\phi(s+t,X(-s,x)),
\ea
\]
which at $s=0$ gives the classical transport equation
\beq\label{phib}
\forall\,t\in\R,\ \forall\,x\in Y_d,\quad{\partial\phi\over\partial t}(t,x)=b(x)\cdot\nabla_x\phi(t,x).
\eeq
Hence, since $\ph(X(t,\cdot))$ is in $C^1(\R^d)$ and has a compact support independent of $t$ when $t$ lies in a compact set of $\R$, we deduce from \eqref{phib} and \eqref{dtmub=0} that
\[
\forall\,t\in\R,\quad{d\over\,dt}\left(\int_{\R^d} \ph(X(t,x))\,d\tilde{\mu}(x)\right)=\int_{\R^d} b(x)\cdot\nabla_x\big(\ph(X(t,x))\big)\,d\tilde{\mu}(x)=0,
\]
or equivalently,
\[
\forall\,t\in\R,\quad \int_{\R^d} \ph(X(t,x))\,d\tilde{\mu}(x)=\int_{\R^d} \ph(x)\,d\tilde{\mu}(x).
\]
On the other hand, we have the following result.
\begin{alem}[\cite{Bri1}, Lemma~3.5]\label{lem.phd}
For any smooth function $\psi\in C^\infty_\sharp(Y_d)$ defined in $Y_d$, there exists a smooth function $\ph\in C^\infty_c(\R^d)$ with compact support in $\R^d$ such that $\psi=\ph_\sharp$.
\end{alem}
Hence, using relation \eqref{phdph} and definition \eqref{tmu} we get that for any $\psi\in C^\infty_\sharp(Y)$,
\[
\ba{ll}
\forall\,t\in\R, & \dis \int_{Y_d} \psi(X(t,y))\,d\mu(y)=\int_{Y_d} \ph_\sharp(X(t,y))\,d\mu(y)=\int_{\R^d} [\ph\circ X(t,\cdot)]_\sharp(y)\,d\mu(y)=
\\*[1.em]
&  \dis =\int_{\R^d} \ph(X(t,x))\,d\tilde{\mu}(x)=\int_{\R^d} \ph(x)\,d\tilde{\mu}(x)=\int_{Y_d} \ph_\sharp(y)\,d\mu(y)=\int_{Y_d} \psi(y)\,d\mu(y),
\ea
\]
which shows that $\mu$ is invariant for the flow $X$. We have just proved the equivalence between the invariance of $\mu$ for the flow and the distributional equation \eqref{dtmub=0} satisfied by $\tilde{\mu}$.
\par\medskip\noindent
{\it Proof of $(ii)\Leftrightarrow(iii)$.}
The equivalence between \eqref{dtmub=0}, or equivalently \eqref{dtmub=02}, and \eqref{dmub=0} is a straightforward consequence of the following relation (which is deduced from $[b\cdot\nabla\ph]_\sharp=b\cdot\nabla\ph_\sharp$ and \eqref{tmu})
\[
\forall\,\ph\in C^1_c(\R^d),\quad \int_{\R^d}b(x)\cdot\nabla\ph(x)\,d\tilde{\mu}(x)=\int_{Y_d} b(y)\cdot\nabla\ph_\sharp(y)\,d\mu(y),
\]
combined with Lemma~\ref{lem.phd}.
This concludes the proof of Proposition~\ref{pro.divcurl}.
\end{proof}
\begin{arem}\label{rem.divcurl}
Equation \eqref{dmub=0} can be considered as the divergence free of the vector-valued measure $\mu\,b$ in the torus $Y_d$, while equation \eqref{dtmub=0} is exactly the divergence free of the vector-valued measure $\tilde{\mu}\,b$ in the space $\R^d$. Equation \eqref{dmub=0} is also equivalent to
\beq\label{divcurl}
\forall\,(\nabla\psi)\in C^0_\sharp(Y_d)^d,\quad \int_{Y_d} b(y)\cdot\nabla\psi(y) \,d\mu(y)=\left(\int_{Y_d} b(y)\,d\mu(y)\right)\cdot
\left(\int_{Y_d} \nabla\psi(y)\,dy\right),
\eeq
since
\[
\nabla\psi\in C^0_\sharp(Y_d)^d\;\Leftrightarrow\; \left(x\mapsto \psi(x)-x\cdot{ \int_{Y_d} \nabla\psi(y)\,dy}\right)\in C^1_\sharp(Y_d).
\]
So, condition~\eqref{divcurl} may be regarded as a divergence-curl result involving the divergence free vector field $\mu\,b$ with the invariant probability measure $\mu$ and the gradient field $\nabla\psi$ with Lebesgue's measure.
\end{arem}
\subsection{The case of a nonlinear current field}\label{ss.1app}
As a direct consequence of Proposition~\ref{pro.Cb} and Proposition~\ref{pro.divcurl}, the following result gives the asymptotics of the ODE's flow \eqref{bX} when $b$ is a (non necessarily divergence free) nonlinear current field.
\begin{apro}\label{pro.FDv}
Let $v\in C^2(\R^d)$ be a function such that
\beq\label{Dv}
\nabla v\in C^1_\sharp(\R^d)^d\quad\mbox{and}\quad \#\left(\big\{x\in Y_d:\nabla v(x)=\overline{\nabla v}\big\}\right)<\infty,
\eeq
and let $F(x,\xi)\in C^1_\sharp(Y_d;C^1(\R^d))^d$ be a vector-valued function satisfying
\beq\label{FDv}
\left\{\ba{rl}
\forall\,x\in Y_d, & F(x,\overline{\nabla v})=0_{\R^d}
\\ \ecart
\forall\,(x,\xi,\eta)\in Y_d\times \R^d\times\R^d,\ \xi\neq \eta, & \big(F(x,\xi)-F(x,\eta)\big)\cdot(\xi-\eta)>0.
\ea\right.
\eeq
\par\noindent
Then, the vector field $b\in C^1_\sharp(Y_d)^d$ defined by
\beq\label{b=FDv}
b(x):=F(x,\nabla v(x))\quad\mbox{for }x\in Y_d,
\eeq
satisfies
\beq\label{cIFDv}
\sfC_b=\{0_{\R^d}\}.
\eeq
\end{apro}
\begin{proof}{ of Proposition~\ref{pro.FDv}}
Let $\mu$ be an invariant probability measure on $Y_d$ for the flow $X$ associated with the vector field $b$ \eqref{b=FDv}.
By virtue of the divergence-curl result \eqref{dmub=0} we have
\[
\int_{Y_d}b(x)\cdot\nabla v(x)\,d\mu(x)=\int_{Y_d}F(x,\nabla v(x))\cdot\nabla v(x)\,d\mu(x)=\left(\int_{Y_d}F(x,\nabla v(x))\,d\mu(x)\right)\cdot\overline{\nabla v}.
\]
This combined with $F(\cdot,\overline{\nabla v})=0_{\R^d}$ and the monotonicity~\eqref{FDv} yields
\beq\label{ineF}
\int_{Y_d}\underbrace{\big(F(x,\nabla v(x))-F(x,\overline{\nabla v})\big)\cdot\big(\nabla v(x)-\overline{\nabla v}\big)}_{\geq 0}\,d\mu(x)=0,
\eeq
which implies that
\[
\big(F(x,\nabla v(x))-F(x,\overline{\nabla v})\big)\cdot\big(\nabla v(x)-\overline{\nabla v}\big)=0\quad d\mu(x)\mbox{-a.e.}
\]
Hence, since $F(x,\cdot)$ is strictly monotonic in the sense of \eqref{FDv}, we deduce that
\[
\nabla v(x)=\overline{\nabla v}\quad d\mu(x)\mbox{-a.e.}
\]
Therefore, due to \eqref{Dv} the measure $\mu$ is a convex combination of the Dirac masses
\[
\mu=\sum_{x\in\{\nabla v=\overline{\nabla v}\}}c_x\,\delta_{x}\qquad\mbox{with}\qquad c_x\geq 0\quad\mbox{and}
\sum_{x\in\{\nabla v=\overline{\nabla v}\}}c_x=1.
\]
Finally, this combined with $F(\cdot,\overline{\nabla v})=0_{\R^d}$ implies that
\[
\int_{Y_d}b(y)\,d\mu(y)=\sum_{x\in\{\nabla v=\overline{\nabla v}\}}c_x\,F(x,\nabla v(x))
=\sum_{x\in\{\nabla v=\overline{\nabla v}\}}c_x\,F(x,\overline{\nabla v})=0_{\R^d},
\]
which leads us to \eqref{cIFDv}.
\end{proof}
\begin{Aexa}
A class of functions $F$ satisfying \eqref{FDv} is given by
\[
F(x,\xi)=\nabla_\xi f(x,\xi)\quad\mbox{for }(x,\xi)\in Y_d\times\R^d,
\]
where $f\in C^1_\sharp(Y_d;C^2(\R^d))$, and for any $x\in Y_d$, the function $f(x,\cdot)$ is strictly convex in $\R^d$ with $\overline{\nabla v}$ as unique minimizer.
\par
For example, the vector field $b$ is the linear current field
\[
b(x)=A(x)\nabla v(x)=\nabla_\xi f(x,\nabla v(x))\quad\mbox{for }x\in Y_d,
\]
when $f$ is the non negative quadratic functional defined by
\[
f(x,\xi):={1\over 2}\,A(x)\,\xi\cdot \xi\quad\mbox{for }(x,\xi)\in Y_d\times\R^d,
\]
for any non negative symmetric matrix-valued function $A\in C^1_\sharp(Y_d)^{d\times d}$.
Note that in this case, the finite set condition of \eqref{Dv} and the strict convexity of $f$ can be replaced by the unique condition $\overline{\nabla v}=0_{\R^d}$, or equivalently, $v$ is $\Z^d$-periodic.
\par
Indeed, let $\mu\in\cI_b$ be an invariant probability measure for the flow $X$.
Similarly as \eqref{ineF}, by the divergence-curl relation \eqref{dmub=0} we have
\[
\int_{Y_d}\underbrace{A(y)\nabla v(y)\cdot\nabla v(y)}_{\geq 0}\,d\mu(y)=\int_{Y_d}b(y)\cdot\nabla v(y)\,d\mu(y)
=\left(\int_{Y_d}b(y)\,d\mu(y)\right)\cdot\overline{\nabla v}=0,
\]
which implies that $A\nabla v\cdot\nabla v=0$ $\mu$-a.e. in $Y_d$.
However, since the matrix-valued $A$ is symmetric and non negative, from the Cauchy-Schwarz inequality we deduce that $A\nabla v=0$ $\mu$-a.e. in $Y_d$, and thus
\[
\int_{Y_d}b(y)\,d\mu(y)=\int_{Y_d}A(y)\nabla v(y)\,d\mu(y)=0_{\R^d}.
\]
Therefore, we obtain the desired equality \eqref{cIFDv}.
\end{Aexa}
\section{Some new results involving the singleton condition}\label{s.newres}
\subsection{A perturbation result}\label{ss.mres}
The main result of the paper is the following.
\begin{atheo}\label{thm.Cbsin}
Let $b\in C^1_\sharp(Y_d)^d$ be such that $b = \rho\,\Phi$, where $\rho$ is a non negative not null function in $C^1_\sharp(Y_d)$ and $\Phi$ is a non vanishing vector field in $C^1_\sharp(Y_d)^d$. Assume that there exists a sequence $(\rho_n)_{n\in\N}\in C^1_\sharp(Y_d)^\N$ such that 
\begin{enumerate}[$(i)$]
\item for any $n\in\N$, $\rho\leq \rho_n >0$ in $Y_d$, and $(\rho_n)_{n\in\N}$ converges uniformly to $\rho$ on $Y_d$,
\item for any $n\in\N$, $\sfC_{b_n} = \{\zeta_n\}$ for some $\zeta_n\in\R^d$, where $b_n:=\rho_n\,\Phi$.
\end{enumerate}
Then, the sequence $(\zeta_n)_{n\in\N}$ converges to some $\zeta\in\R^d$.
Moreover, we have the following alternative:
\begin{itemize}
\item If $\rho$ is positive in $Y_d$,  then $\sfC_{b}=\{\zeta\}$ with $\zeta\neq 0_{\R^d}$.
\item If $\rho$ vanishes in $Y_d$, then $\sfC_b=[0_{\R^d},\zeta]$. Moreover, we have $\{0_{\R^d},\zeta\}\subset\sfA_b$.
\end{itemize}
\end{atheo}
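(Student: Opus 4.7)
The plan centres on a classical change-of-time correspondence between the flow of $b_n = \rho_n\Phi$ and that of $\Phi$ itself. Since $\rho_n > 0$, Proposition~\ref{pro.divcurl} applied to $\mu\, b_n = (\rho_n\mu)\,\Phi$ yields a bijection $\cI_{b_n}\leftrightarrow\cI_\Phi$ given by $\sigma := \rho_n\mu/\int\rho_n\,d\mu$, with inverse $\mu = (\sigma/\rho_n)/\int\rho_n^{-1}\,d\sigma$. Through this bijection the singleton condition $\sfC_{b_n}=\{\zeta_n\}$ rewrites as the explicit identity
\[
\zeta_n\int_{Y_d}\frac{d\sigma}{\rho_n}\;=\;\int_{Y_d}\Phi\,d\sigma, \qquad\forall\,\sigma\in\cI_\Phi, \qquad (\star)
\]
which is the engine of the whole proof.

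Fix any $\sigma\in\cI_\Phi$ (non-empty since $Y_d$ is compact). Because $\rho_n\to\rho$ uniformly with $\rho_n>0$, monotone/dominated convergence gives $\int\rho_n^{-1}\,d\sigma\to\int\rho^{-1}\,d\sigma\in(0,+\infty]$, so $(\star)$ forces $\zeta_n\to\zeta := \int\Phi\,d\sigma\big/\int\rho^{-1}\,d\sigma$ (with the convention $\zeta=0$ if the denominator is $+\infty$); the value $\zeta$ cannot depend on $\sigma$ since $\zeta_n$ itself does not. When $\rho>0$ throughout $Y_d$, the identity $(\star)$ applies verbatim to $b$ and $\rho$, so every $\mu\in\cI_b$ returns $\int b\,d\mu = \zeta$ and therefore $\sfC_b=\{\zeta\}$. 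The non-triviality $\zeta\neq 0$ comes from ruling out $\sfC_\Phi=\{0\}$, i.e.\ using that $\Phi$ is non-vanishing together with $\rho>0$ to exhibit some $\sigma\in\cI_\Phi$ with $\int\Phi\,d\sigma\neq 0$.

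For the alternative where $\rho$ vanishes, given $\mu\in\cI_b$ set $\alpha := \mu(\{\rho>0\})\in[0,1]$. If $\alpha>0$, the measure $\sigma := \rho\mu/\int\rho\,d\mu$ belongs to $\cI_\Phi$ (Proposition~\ref{pro.divcurl} applied to $\mu\, b=(\rho\mu)\,\Phi$), and a direct computation yields $\int\rho^{-1}\,d\sigma = \alpha/\int\rho\,d\mu$; combined with $(\star)$ in the limit $n\to\infty$, this gives $\int b\,d\mu = \int\rho\,d\mu\cdot\int\Phi\,d\sigma = \alpha\zeta$. If $\alpha=0$ then $b=0$ $\mu$-a.e.\ and $\int b\,d\mu=0$. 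Either way $\int b\,d\mu\in[0,\zeta]$, hence $\sfC_b\subseteq[0,\zeta]$. The reverse inclusion follows by noting that $0\in\sfC_b$ via $\delta_{x_0}$ at any zero $x_0$ of $\rho$, and $\zeta\in\sfC_b$ via a weak-$*$ cluster point $\mu$ of $(\mu_n)\subset\cI_{b_n}$: invariance of $\mu$ for $b$ passes to the limit by Proposition~\ref{pro.divcurl} and uniform convergence $b_n\to b$, so $\int b\,d\mu = \lim\int b_n\,d\mu_n = \lim\zeta_n = \zeta$. Convexity of $\sfC_b$ then closes the chain.

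The main obstacle is the final trajectory-level claim $\{0,\zeta\}\subset\sfA_b$. The point $0\in\sfA_b$ is immediate since at any zero $x_0$ of $\rho$ the orbit is stationary, so $X(k,x_0)/k\to 0$. Exhibiting a concrete $x\in Y_d$ whose orbit satisfies $X(k,x)/k\to\zeta$ is, however, not a consequence of the weak-$*$ argument used above, which only produces $\zeta$ as an average under some invariant measure. The natural strategy is to choose $x$ whose entire forward orbit avoids $\{\rho=0\}$; on such an orbit the $b$-flow is a genuine time-reparametrisation of the $\Phi$-flow, and asymptotics should transfer cleanly. Guaranteeing that such an $x$ exists, and quantifying how uniformly the $b$-orbit tracks the $b_n$-orbits so that one can interchange $\lim_n$ and $\lim_k$, is where the finer features of the perturbation (in particular the ordering $\rho\le\rho_n$ implicit in the setup and the continuity of $\Phi$) need to be exploited; this is where I expect the technical heart of the proof to lie.
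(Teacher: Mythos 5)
Your route to the convergence $\zeta_n\to\zeta$ and to the identification of $\sfC_b$ is correct and genuinely different from the paper's. Where the paper starts from a measure $\mu\in\cI_b$ with $\int b\,d\mu\neq 0$ and pushes it forward to $\mu_n\in\cI_{b_n}$ via $d\mu_n=C_n(\rho/\rho_n)\,d\mu$, you instead pass through the $n$-independent set $\cI_\Phi$ and encode the singleton hypothesis as the identity $(\star)$; both are applications of the divergence--curl Proposition~\ref{pro.divcurl}, but your reformulation makes the dependence on $\rho_n$ appear only through the scalar $\int\rho_n^{-1}\,d\sigma$, which is a clean way to see the limit. Your computation $\int b\,d\mu=\mu(\{\rho>0\})\,\zeta$ matches the paper's $\int b\,d\mu=\zeta/c_\mu$, and your limiting argument producing $\zeta\in\sfC_b$ from a weak-$*$ cluster point of $(\mu_n)$ is exactly the paper's. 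Two caveats: first, your dominated-convergence step $\int\rho_n^{-1}\,d\sigma\to\int\rho^{-1}\,d\sigma$ genuinely needs the ordering $\rho\le\rho_n$, which you relegate to a final aside but which must be invoked at that point (the paper uses the symmetric fact $\rho/\rho_n\le1$); second, neither you nor the paper actually proves $\zeta\neq0$ in the $\rho>0$ case beyond asserting that $\sfC_\Phi\neq\{0\}$, so you should not expect to find a missing trick there.

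The genuine gap is $\zeta\in\sfA_b$, and the strategy you sketch is not the one that works here. You propose to exhibit a specific $x$ whose orbit avoids $\{\rho=0\}$ and to transfer asymptotics by time-reparametrisation; that is the mechanism behind Corollary~\ref{cor.dist-orbite}, but it presupposes the existence of such an $x$, which is not available in general. The paper instead argues measure-theoretically (Lemma~\ref{lem.Cbnsin}): pick $\mu\in\cI_b$ with $\int b\,d\mu\neq0$ (possible because $\zeta\neq0$ puts a non-zero element in $\sfC_b$); Birkhoff's pointwise ergodic theorem gives a measurable $g$ with $X(t,x)/t\to g(x)\in\sfA_b\subset\sfC_b=[0,\zeta]$ for $\mu$-a.e.\ $x$, hence $g(x)=\Delta(x)\,\zeta$ with $\Delta:Y_d\to[0,1]$ and $\Delta=0$ on $\{\rho=0\}$; integrating $g$ against $\mu$ and using the semigroup property plus Fubini yields $\bigl(\int\Delta\,d\mu\bigr)\zeta=\int b\,d\mu=\mu(\{\rho>0\})\,\zeta$, which forces $\Delta=1$ $\mu$-a.e.\ on $\{\rho>0\}$. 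Since $\mu(\{\rho>0\})>0$, some $x$ satisfies $X(t,x)/t\to\zeta$, so $\zeta\in\sfA_b$. The point you missed is that one does not need to locate a good trajectory by hand: Birkhoff's theorem provides a full-measure set of trajectories with existing limits, and the rigid structure $\sfC_b=[0,\zeta]$ combined with the already-established average $\int b\,d\mu=\mu(\{\rho>0\})\zeta$ pins the a.e.\ limit down to $\zeta$ on $\{\rho>0\}$.
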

\begin{arem}
In dimension two and in the second alternative of Theorem~\ref{thm.Cbsin}, it is not surprising to obtain for the rotation set $\sfC_b$ a closed line segment with one end at $0_{\R^2}$.
Indeed, Franks and Misiurewicz \cite[Theorem~1.2]{FrMi} proved that the rotation set of any two-dimensional continuous flow is always a closed line segment of a line passing through $0_{\R^2}$. Moreover, this segment has one end at $0_{\R^2}$, when it also has an irrational slope.
However, our perturbation result provides such a closed line segment in any dimension and for any non zero slope.
\end{arem}
\begin{proof}{ of Theorem~\ref{thm.Cbsin}}
First of all, assume that $\sfC_b\neq\{0_{\R^d}\}$, namely there exists a probability measure $\mu\in\cI_b$ satisfying
\[
\int_{Y_d}b(x)\,d\mu(x)\neq 0.
\]
This implies that
\beq\label{intrhomu}
\int_{Y_d} \rho(x)\,d\mu(x) = \int_{Y_d}{|b(x)|\over|\Phi(x)|}\,d\mu(x) >0.
\eeq
For every $n\in\N$, define the probability measure $\mu_n$ on $Y_d$ by
\beq\label{munrhon}
d\mu_n(x) := C_n\, {\rho(x)\over \rho_n(x)}\,d\mu(x),\quad \text{where} \quad C_n := \left(\int_{Y_d}{\rho(y)\over \rho_n(y)}\,d\mu(y)\right)^{-1}\in(0,\infty)
\eeq
due to $\rho_n > 0$ and \eqref{intrhomu}.
Since $\mu\in\cI_b$, it follows from equality \eqref{dmub=0} that 
\[
\forall\,\ph\in C^1_\sharp(Y_d),\quad \int_{Y_d} \rho_n(x)\,\Phi(x)\cdot\nabla\ph(x)\,d\mu_n(x)
= C_n\int_{Y_d} \rho(x)\,\Phi(x)\cdot\nabla\ph(x)\,d\mu(x)=0,
\]
hence by the equivalence $(i)$-$(iii)$ of Proposition~\ref{pro.divcurl}, $\mu_n\in\cI_{b_n}$. Then, from assumption $\sfC_{b_n} = \{\zeta_n\}$ we deduce that
\[
\zeta_n = \int_{Y_d}b_n(x)\, d\mu_n(x) = C_n \int_{Y_d}\rho_n(x)\, \Phi(x){\rho(x)\over \rho_n(x)}\,d\mu(x) = C_n \int_{Y_d}b(x)\, d\mu(x).
\]
Moreover, by Lebesgue's theorem and \eqref{intrhomu} we have
\beq\label{cmu}
\lim_{n\to\infty}C_n = c_\mu:=\left(\int_{\{\rho>0\}}d\mu(x)\right)^{-1}\in[1,\infty).
\eeq
Therefore, the sequence $(\zeta_n)_{n\in\N}$ converges to some $\zeta\in\R^d$ which is independent of $\mu$ and satisfies the equality
\beq\label{zetacmu}
\zeta=c_\mu \int_{Y_d}b(x)\, d\mu(x),\quad\mbox{for any $\mu\in\cI_b$ with }\int_{Y_d}b(x)\, d\mu(x)\neq 0.
\eeq
\par\noindent
$\bullet$ If $\rho$ is positive in $Y_d$, then \eqref{intrhomu} and $c_\mu=1$ hold for any $\mu\in\cI_b$. This combined with \eqref{zetacmu} implies that $\sfC_{b}\setminus\{0_{\R^d}\}=\{\zeta\}$. Therefore, due to the convexity of $\sfC_b$ we obtain that $\sfC_{b}=\{\zeta\}$.
\par\noindent
$\bullet$ On the contrary, assume that there exists $\al\in Y_d$ such that $\rho(\alpha)=0$. Since the Dirac distribution $\delta_\alpha$ at $\alpha$ is invariant for the flow associated with $b$, we have
\[
0_{\R^d}=\int_{Y_d}b(x)\,d\delta_{\alpha}(x)\in \sfC_b.
\]
If the sequence $(\zeta_n)_{n\in\N}$ converges to~$0_{\R^d}$, then we have $\sfC_b=\{0_{\R^d}\}$.
Indeed, if $\sfC_b\neq \{0_{\R^d}\}$, then the first step of the proof implies \eqref{cmu} and \eqref{zetacmu} for some $\mu\in\cI_b$ with $\zeta=\lim_n \zeta_n\neq 0_{\R^d}$, which yields a contradiction.
\par
Now, assume that the sequence $(\zeta_{n})_{n\in\N}$ does not converge to~$0_{\R^d}$, and consider a subsequence $(\zeta_{n_k})_{k\in\N}$ which converges to some $\hat{\zeta}\neq 0_{\R^d}$.
Up to extract a new subsequence, we may assume that the sequence $(\mu_{n_k})_{k\in\N}$ converges weakly~$*$ to some probability measure $\hat{\mu}$ on $Y_d$.
Passing to the limit in the divergence-curl relation \eqref{dmub=0} satisfied by $\mu_{n_k}\in\cI_{b_{n_k}}$:
\beq\label{bkmun}
\forall\,\ph\in C^1_\sharp(Y_d),\quad \int_{Y_d} b_{n_k}(x)\cdot\nabla\ph(x)\,d\mu_{n_k}(x)=0,
\eeq
we get that
\beq\label{bmuh}
\forall\,\ph\in C^1_\sharp(Y_d),\quad \int_{Y_d} b(x)\cdot\nabla\ph(x)\,d\hat{\mu}(x)=0,
\eeq
so that $\hat{\mu}\in\cI_b$ again by the equivalence $(i)$-$(iii)$ of Proposition~\ref{pro.divcurl}. Moreover, by the uniform convergence of the sequence $(b_n)_{n\in\N}$ to~$b$ we have
\beq\label{zetakn}
0_{\R^d}\neq \hat{\zeta}=\lim_{k\to\infty}\zeta_{n_k}=\lim_{k\to\infty}\int_{Y_d} b_{n_k}(x)\,d\mu_{n_k}(x)=\int_{Y_d} b(x)\,d\hat{\mu}(x)\in \sfC_b\setminus\{0_{\R^d}\}.
\eeq
Therefore, we may apply the first step of the proof with $\mu=\hat{\mu}$.
Hence, the whole sequence $(\zeta_{n})_{n\in\N}$ converges to $\zeta=\hat{\zeta}\in \sfC_b\setminus\{0_{\R^d}\}$.
Moreover, by \eqref{cmu} equality \eqref{zetacmu} also reads as
\beq\label{zecmu}
\int_{Y_d}b(x)\, d\mu(x)={1\over c_\mu}\,\zeta \in [0_{\R^d},\zeta],\quad\mbox{for any $\mu\in\cI_b$ with }\int_{Y_d}b(x)\, d\mu(x)\neq 0_{\R^d}.
\eeq
This combined with $0_{\R^d}\in \sfC_b$, $\zeta\in \sfC_b\setminus\{0_{\R^d}\}$ and the convexity of $\sfC_b$ implies that $\sfC_b=[0_{\R^d},\zeta]$.
\par\noindent
Now, let us prove that $\{0_{\R^d},\zeta\}\subset\sfA_b$. On the one hand, since since $X(\cdot,\alpha)=\alpha$, we have
\[
\lim_{t\r\infty} {X(t,\alpha)\over t}=0_{\R^d},
\]
hence $0_{\R^d}\in \sfA_b$.
On the other hand, by equality \eqref{lim-mu-pp-zeta} in Lemma~\ref{lem.Cbnsin} below (based on the above proved formulas \eqref{cmu} and \eqref{zecmu}), we get immediately that $\zeta\in\sfA_b$.
\par
Finally, it remains to deal with the case $\sfC_b=\{0_{\R^d}\}$. We have just to prove that the sequence $(\zeta_n)_{n\in\N}$ converges to $0$. 
Repeating the argument between \eqref{bkmun} and \eqref{zetakn} for any subsequence $(\zeta_{n_k})_{k\in\N}$ which converges to $\hat{\zeta}\in\R^d$ and any subsequence $(\mu_{n_k})_{k\in\N}$ which converges weakly~$*$ to $\hat{\mu}$ in $\cM_p(Y_d)$, we get that $\hat{\mu}\in\cI_b$ and
\[
\hat{\zeta}=\lim_{k\to\infty}\zeta_{n_k}=\lim_{k\to\infty}\int_{Y_d} b_{n_k}(x)\,d\mu_{n_k}(x)=\int_{Y_d} b(x)\,d\hat{\mu}(x)=0_{\R^d}.
\]
Therefore, the whole sequence $(\zeta_n)_{n\in\N}$ converges to $0_{\R^d}$.
The proof of Theorem~\ref{thm.Cbsin} is now complete.
\end{proof}
\begin{arem}\label{rem.Cbnsin}
In the setting of Theorem~\ref{thm.Cbsin}, Example~\ref{exa.2} below provides a two-dimensional case of vector field $b=\rho\,\Phi$ with a vanishing function $\rho$, in which $\sfC_b$ is a closed line segment of~$\R^2$ not reduced to a singleton, and thus $\#\sfA_b\geq 2$.
\end{arem}
\begin{alem}\label{lem.Cbnsin}
Let $b$ be a vector field satisfying the assumptions of Theorem~\ref{thm.Cbsin} with a vanishing function $\rho$.
Then,  for any $\mu\in\cI_b$ with $\int_{Y_d} b(x)\, d\mu(x) \neq 0$, we have 
\begin{equation}\label{lim-mu-pp-zeta}
\lim_{t\r\infty} \frac{X(t,x)}{t} = \zeta \quad \text{for  $\mu$-a.e. }x\in \{\rho > 0\}.
\end{equation}
\end{alem}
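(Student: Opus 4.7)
The plan is to combine Birkhoff's ergodic theorem for the $\mu$-invariant flow $X$ with the structural identities \eqref{cmu} and \eqref{zecmu} already established in the proof of Theorem~\ref{thm.Cbsin}. First, by the continuous-time Birkhoff ergodic theorem applied componentwise to the continuous function $b$, for $\mu$-almost every $x\in Y_d$ the limit
\begin{equation*}
\bar{b}(x) := \lim_{t\to\infty}\frac{1}{t}\int_0^t b(X(s,x))\,ds
\end{equation*}
exists and defines a flow-invariant function in $L^1(Y_d,\mu)^d$ with $\int_{Y_d}\bar{b}\,d\mu=\int_{Y_d}b\,d\mu$. Since $X(t,x)-x=\int_0^t b(X(s,x))\,ds$, this coincides with $\lim_{t\to\infty}X(t,x)/t$, so it suffices to prove $\bar{b}(x)=\zeta$ for $\mu$-a.e.~$x\in\{\rho>0\}$.

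Second, by the ergodic decomposition $\mu=\int_{Y_d}\mu_x\,d\mu(x)$ with $\mu_x\in\cE_b$ for $\mu$-almost every~$x$, one has $\bar{b}(x)=\int_{Y_d}b\,d\mu_x\in\sfC_b$. Under the hypothesis $\int_{Y_d}b\,d\mu\neq 0$, the proof of Theorem~\ref{thm.Cbsin} has already shown that $\sfC_b=[0,\zeta]$, so we may write $\bar{b}(x)=\lambda(x)\,\zeta$ for some measurable $\lambda:Y_d\to[0,1]$. Moreover, every $x\in\{\rho=0\}$ is a fixed point of $X$ because $b(x)=\rho(x)\Phi(x)=0$; hence $\bar{b}(x)=b(x)=0$ at such points, and $\lambda\equiv 0$ on $\{\rho=0\}$.

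Third, using that the Birkhoff mean preserves the integral, together with \eqref{cmu} and \eqref{zecmu} applied to $\mu$ itself,
\begin{equation*}
\zeta\int_{Y_d}\lambda\,d\mu \;=\; \int_{Y_d}\bar{b}\,d\mu \;=\; \int_{Y_d}b\,d\mu \;=\; \frac{\zeta}{c_\mu} \;=\; \mu(\{\rho>0\})\,\zeta.
\end{equation*}
Since $\zeta\neq 0$, this gives $\int_{Y_d}\lambda\,d\mu=\mu(\{\rho>0\})$. Combined with $\lambda\leq 1$ on $Y_d$ and $\lambda\equiv 0$ on $\{\rho=0\}$, this forces $\lambda=1$ for $\mu$-a.e.~$x\in\{\rho>0\}$, which yields \eqref{lim-mu-pp-zeta}.

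The main subtle point is the second step: ensuring that the Birkhoff limit $\bar{b}(x)$ lies in the segment $[0,\zeta]\subset\R^d$. This uses the ergodic decomposition theorem to realize $\bar{b}(x)$ as an element of $\sfC_b$, which by Theorem~\ref{thm.Cbsin} has already been identified as $[0,\zeta]$. Once this pointwise constraint on $\bar{b}$ is secured, the conclusion reduces to the elementary fact that a $[0,1]$-valued function whose integral equals the measure of a set on which it is supported must equal $1$ almost everywhere on that set.
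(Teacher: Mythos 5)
Your proof is correct and follows essentially the same line as the paper's: apply Birkhoff to get $\mu$-a.e.\ convergence, place the limit in $\sfC_b=[0,\zeta]$ so it has the form $\lambda(x)\,\zeta$ with $\lambda$ vanishing on $\{\rho=0\}$, and then use the integral identity already established in the proof of Theorem~\ref{thm.Cbsin} (namely $\int_{Y_d}b\,d\mu=\mu(\{\rho>0\})\,\zeta$) to force $\lambda=1$ $\mu$-a.e.\ on $\{\rho>0\}$. The only cosmetic difference is that you justify $\bar b(x)\in\sfC_b$ via the ergodic decomposition theorem, whereas the paper invokes $\sfA_b\subset\sfC_b$ together with Proposition~\ref{pro.invmeas}, which the authors set up precisely to avoid relying on ergodic decomposition.
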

\begin{proof}{ of Lemma~\ref{lem.Cbnsin}}
Let $\mu\in\cI_b$ with $\int_{Y_d} b(x)\, d\mu(x) \neq 0$. By Birkhoff's theorem, there exists a measurable function $x\mapsto g(x)$ 
such that by \eqref{Ab} and \eqref{ACb}
\[
\lim_{t\r\infty} \frac{X(t,x)}{t} = g(x)\in \sfA_b\subset \sfC_b \quad \text{for $\mu$-a.e. }x\in Y_d.
\]
(Also see Proposition A.1 in the Appendix to get that $g(x)\in\sfC_b$). 
Hence, by virtue of Theorem~\ref{thm.Cbsin} we have
\begin{equation} \label{lim-d-zeta}
\lim_{t\r\infty} \frac{X(t,x)}{t} = \Delta(x)\, \zeta \quad \text{for $\mu$-a.e. }x\in Y_d,
\end{equation}
where $\Delta :Y_d\to [0,1]$ is a measurable function such that $\Delta(x)=0$ if $\rho(x)=0$.
Applying successively convergence \eqref{lim-d-zeta}, Lebesgue's theorem, Fubini's theorem and the invariance of the measure $\mu$, we get that 
\[
\ba{l}
\dis \left(\int_{Y_d} \Delta(x)\, d\mu(x)\right)\zeta = \lim_{t\r\infty}\int_{Y_d}\frac{X(t,x)}{t}\,d\mu(x) = \lim_{t\r\infty}\int_{Y_d}\left({1\over t}\int_0^t b(X(s,x))\,ds\right)d\mu(x)
\\ \ecart
\dis = \lim_{t\r\infty}{1\over t}\int_0^t\left(\int_{Y_d} b(X(s,x))\,d\mu(x)\right)ds = \int_{Y_d} b(x)\, d\mu(x),
\ea
\]
or equivalently,
\[
\int_{\{\rho > 0\}} b(x)\, d\mu(x) = \left(\int_{\{\rho > 0\}} \Delta(x)\, d\mu(x)\right)\zeta.
\]
Moreover, since $\int_{Y_d} b(x)\, d\mu(x) \neq 0$, from the formulas \eqref{cmu} and \eqref{zecmu} in the proof of Theorem~\ref{thm.Cbsin} we deduce that 
\[
\int_{Y_d} b(x)\, d\mu(x) = \int_{\{\rho > 0\}} b(x)\, d\mu(x) = \beta_\mu\,\zeta,\quad \text{where}\quad \beta_\mu := \int_{\{\rho>0\}}d\mu(x),
\]
so that
\[
\int_{\{\rho>0\}}d\mu(x) = \int_{\{\rho > 0\}} \Delta(x)\, d\mu(x),
\]
or equivalently,
\[
\int_{\{\rho>0\}}\big(1-\Delta(x))\, d\mu(x) = 0.
\]
Since the function $\Delta$ takes values in $[0,1]$, the former equality implies that $\Delta=1$ for $\mu$-a.e. in $\{\rho > 0\}$,
which combined with \eqref{lim-d-zeta} yields the desired limit \eqref{lim-mu-pp-zeta}.
\end{proof}
\begin{arem}\label{rem.Phindep}
Theorem~\ref{thm.Cbsin} cannot be extended to the more general case where the direction $\Phi$ of $b_n$ also depends on $n$, as shown in Example~\ref{exa.1} below. More precisely, the independence of $\Phi$ with respect to $n$ is crucial for the proof of Theorem~\ref{thm.Cbsin} to build in \eqref{munrhon} the invariant probability measure $\mu_n\in\cI_{b_n}$ from a given invariant probability measure $\mu\in\cI_b$.
\end{arem}
\begin{acor}\label{cor.AbCb}
Let $\Phi$ be a non vanishing vector field in $C^1_\sharp(Y_d)^d$.
Then, we have for the uniform convergence topology in $C^0_\sharp(Y_d)$,
\beq\label{Cbrho>0}
\ba{c}
\dis \big\{\rho\in C^1_\sharp(Y_d):\rho>0\mbox{ and }\#\,\sfC_{\rho\,\Phi}=1\big\}=\big\{\rho\in C^1_\sharp(Y_d):\rho>0\mbox{ and }\#\,\sfA_{\rho\,\Phi}=1\big\}
\\ \ecart
\mbox{ is a closed subset of }\big\{\rho\in C^1_\sharp(Y_d):\rho>0\big\}.
\ea
\eeq
\end{acor}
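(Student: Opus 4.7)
The plan is to split \eqref{Cbrho>0} into two claims and treat them separately. The set equality follows directly from \eqref{ABCb} applied to $b=\rho\,\Phi$: the equivalence $\#\sfA_b=1\Leftrightarrow\#\sfC_b=1$ holds for every vector field $b\in C^1_\sharp(Y_d)^d$, so both sides of the equality in \eqref{Cbrho>0} describe the same subset. For the closedness assertion I fix a sequence $(\rho_n)_{n\in\N}$ of positive functions in $C^1_\sharp(Y_d)$ with $\sfC_{\rho_n\Phi}=\{\zeta_n\}$ for some $\zeta_n\in\R^d$, converging uniformly on $Y_d$ to a positive limit $\rho\in C^1_\sharp(Y_d)$, and I aim to conclude that $\sfC_{\rho\Phi}$ is a singleton. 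The natural route is to invoke Theorem~\ref{thm.Cbsin}, whose first alternative (positive limit) gives exactly the desired singleton property.

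The main obstacle is that Theorem~\ref{thm.Cbsin} is used with the monotone domination $\rho\leq\rho_n$ (this hypothesis appears in the introduction and underlies the use of Lebesgue's dominated convergence on the constants $C_n$ in its proof), which is not forced by mere uniform convergence. I plan to bypass this by a harmless rescaling: define $c_n:=\max_{Y_d}(\rho/\rho_n)$, which is well defined, finite and positive because $\rho_n>0$ is continuous on the compact torus, and set $\tilde\rho_n:=c_n\rho_n\in C^1_\sharp(Y_d)$. By construction $\tilde\rho_n\geq\rho$ and $\tilde\rho_n>0$ on $Y_d$. Combining the uniform convergence $\rho_n\to\rho$ with $\rho>0$ yields $\rho/\rho_n\to 1$ uniformly, hence $c_n\to 1$ and $\tilde\rho_n\to\rho$ uniformly in $C^0_\sharp(Y_d)$.

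The last point to verify is that the singleton property survives the scaling, which is transparent at the level of the flow. If $X_n$ denotes the flow associated with $\rho_n\,\Phi$, the chain rule shows that $(t,x)\mapsto X_n(c_n t,x)$ is the flow associated with $\tilde\rho_n\,\Phi$; hence for every $x\in Y_d$,
\[
\frac{X_n(c_n t,x)}{t}=c_n\,\frac{X_n(c_n t,x)}{c_n t}\;\longrightarrow\; c_n\zeta_n\quad\text{as }t\to\infty,
\]
and Proposition~\ref{pro.Cb} yields $\sfC_{\tilde\rho_n\Phi}=\{c_n\zeta_n\}$. The sequence $(\tilde\rho_n)_{n\in\N}$ now satisfies every hypothesis of Theorem~\ref{thm.Cbsin} with positive limit $\rho$, so the theorem produces $\sfC_{\rho\Phi}=\{\zeta\}$ for some $\zeta\in\R^d$, which is the desired closedness statement in \eqref{Cbrho>0}.
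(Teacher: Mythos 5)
Your proof is correct and follows the same skeleton as the paper's (which simply cites \eqref{ABCb} for the set equality and the first case of Theorem~\ref{thm.Cbsin} for closedness), but you add a precautionary rescaling step to secure the domination $\rho\leq\tilde\rho_n$. You correctly noticed that the abstract and introduction state $\rho\leq\rho_n$ while the theorem as written lists only positivity and uniform convergence; your time-reparametrization $\tilde X_n(t,x)=X_n(c_n t,x)$ with $c_n=\max_{Y_d}(\rho/\rho_n)$ is a clean and valid way to manufacture the domination, and you correctly deduce $\sfC_{\tilde\rho_n\Phi}=\{c_n\zeta_n\}$ via Proposition~\ref{pro.Cb}. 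That said, in the setting of this corollary the rescaling is actually unnecessary: since $\rho\in C^1_\sharp(Y_d)$ is strictly positive on the compact torus it is bounded below by some $m>0$, uniform convergence gives $\rho_n\geq m/2$ for large $n$, hence $\rho/\rho_n$ is uniformly bounded (indeed $\rho/\rho_n\to 1$ uniformly), so the Lebesgue step producing $C_n\to c_\mu=1$ in the proof of Theorem~\ref{thm.Cbsin} goes through unchanged. The domination hypothesis is only genuinely needed in the regime where $\rho$ vanishes, which the present corollary excludes. Your extra step therefore costs nothing and removes any doubt, but the paper's shorter citation is already adequate here.
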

\begin{proof}{ of Corollary~\ref{cor.AbCb}}
The equality of the sets in \eqref{Cbrho>0} follows directly from \eqref{ACb}.
Moreover, the fact that the first set is closed is a straightforward consequence of the first case of Theorem~\ref{thm.Cbsin}.
\end{proof}
\begin{arem}\label{rem.AbCb}
Example~1 of \cite{LlMa} provides a two-dimensional case where the field $b^*=\Phi^*$, {\em i.e.} $\rho^*=1$, is such that $\#\sfA_{\Phi^*}=2$. Therefore, by \eqref{Cbrho>0} there exists an open ball $B^*$ centered at $\rho^*=1$ such that $\#\sfA_{\rho\,\Phi^*}>1$ for any $\rho\in B^*$.
\par
On the contrary, the assertion \eqref{Cbrho>0} of Corollary~\ref{cor.AbCb} does not hold in general when condition $\rho>0$ is enlarged to condition $\rho\geq 0$.
Indeed, in the setting of Theorem~\ref{thm.Cbsin} the two-dimensional Example~\ref{exa.2} provides a sequence of fields $(b_n=\rho_n\,\Phi)_{n\geq 1}$ which converges uniformly in $Y_2$ to some field $b=\rho\,\Phi$, so that $\sfC_b$ is not reduced to a singleton while $C_{b_n}$ is a singleton for any $n\geq 1$. Therefore, in this case the set
\[
\dis \big\{r\in C^1_\sharp(Y_d):r\geq 0\mbox{ and }\#\,\sfC_{r\Phi}=1\big\}=\big\{r\in C^1_\sharp(Y_d):r\geq 0\mbox{ and }\#\,\sfA_{r\phi}=1\big\}
\]
is not closed in $\big\{r\in C^1_\sharp(Y_d):r\geq 0\big\}$.
\end{arem}
\subsection{Applications to the asymptotics of the ODE's flow}\label{ss.appflow}
The first result provides the asymptotics of the flow $X(\cdot,x)$ at any point $x$ whose orbit does not meet the set $\{\rho=0\}$ in $Y_d$.
\begin{acor}\label{cor.dist-orbite}
Assume that the conditions of Theorem~\ref{thm.Cbsin} hold with a vanishing function~$\rho$.
Denote by $\pi$ the canonical projection from $\R^d$ on the torus $Y_d$, and define for $x\in\R^d$, 
\beq\label{Fx}
F_x := \overline{\pi\big(X(\R,x)\big)}^{Y_d},
\eeq
{\em i.e.} the closure in $Y_d$ of the projection on $Y_d$ of the orbit $X(\R,x)$ of $x$.
Then, we have
\begin{equation}\label{lim-zeta-support}
\forall\,x\in Y_d,\quad F_x \cap \{\rho=0\} = \mbox{{\rm \O}}\;\Rightarrow\; \lim_{t\r\infty} \frac{X(t,x)}{t} = \zeta.
\end{equation}
\end{acor}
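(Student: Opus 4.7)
My plan is to exploit the support constraint $F_x\cap\{\rho=0\}=\mbox{\O}$ to ensure that every invariant measure generated along the orbit of $x$ avoids the zero set of $\rho$, then to plug this into the key identity $\zeta=c_\mu\int b\,d\mu$ extracted from the proof of Theorem~\ref{thm.Cbsin}.

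First I would observe that $F_x$ and $\{\rho=0\}$ are two disjoint closed sets in the compact torus $Y_d$; by continuity of $\rho$ there exists $\delta>0$ with $\rho\geq\delta$ on $F_x$. Second, I would invoke Proposition~\ref{pro.invmeas} (the compactness-based construction of invariant measures already used to derive Proposition~\ref{pro.Cb}): for any $\xi\in\R^d$ and any sequence $t_n\to\infty$, any accumulation point of $u_n:=\frac{1}{t_n}\int_0^{t_n} b(X(s,x))\cdot\xi\,ds$ is of the form $\int b\cdot\xi\,d\mu$ for some $\mu\in\cI_b$ obtained as the weak-$*$ limit of the time-average measures $\nu_n:=\frac{1}{t_n}\int_0^{t_n}\delta_{\pi(X(s,x))}\,ds$. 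Since each $\nu_n$ is supported in $\pi(X([0,t_n],x))\subset F_x$ and $F_x$ is closed in $Y_d$, the limit measure $\mu$ is itself supported in $F_x$; consequently $\mu(\{\rho>0\})=1$ and $\int_{Y_d}\rho\,d\mu\geq\delta>0$, whence in the notation of Theorem~\ref{thm.Cbsin} one has $c_\mu=1$.

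I would then split into two cases. If $\zeta=0$, the part of the proof of Theorem~\ref{thm.Cbsin} following \eqref{cmu}--\eqref{zecmu} shows that $\sfC_b=\{0\}$, so Proposition~\ref{pro.Cb} already gives $X(t,x)/t\to 0=\zeta$ for every $x\in Y_d$, which settles the conclusion. If $\zeta\neq 0$, I would apply identity \eqref{zecmu} to the invariant measure $\mu$ built above: since $\int\rho\,d\mu>0$, the perturbed measure $\mu_n$ of \eqref{munrhon} is legitimate, and then $c_\mu=1$ forces $\int_{Y_d} b\,d\mu=\zeta$. Applied to every accumulation point of the bounded sequence $X(t,x)/t=\frac{1}{t}\int_0^t b(X(s,x))\,ds$, tested coordinate by coordinate through the choice $g=b\cdot\xi$ above, this shows that $\zeta$ is the only possible limit point, hence the convergence \eqref{lim-zeta-support} follows.

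The main obstacle I anticipate is bookkeeping at the transition to the vanishing-$\rho$ setting: one must be sure that the formula $\zeta=c_\mu\int b\,d\mu$ remains available for the specific measure $\mu$ produced from the orbit of $x$ rather than only for the measures constructed ad hoc inside Theorem~\ref{thm.Cbsin}, where $\int b\,d\mu\neq 0$ was the hypothesis used to normalize $\mu_n$. The compactness of $F_x$ together with its disjointness from $\{\rho=0\}$ is exactly what bypasses this difficulty: it yields $\int\rho\,d\mu\geq\delta>0$, which is the condition that actually drives the definition \eqref{munrhon} and, via $c_\mu=1$, pins down $\int b\,d\mu=\zeta$.
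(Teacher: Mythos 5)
Your proposal is correct and follows essentially the same route as the paper's proof: build the invariant measure $\mu$ as a weak-$*$ limit of the time-average measures along the orbit of $x$ (via Proposition~\ref{pro.invmeas} and Lemma~\ref{lem-extrac}), observe that $\mathrm{Supp}(\mu)\subset F_x\subset\{\rho>0\}$ so that $c_\mu=1$, and conclude $\int_{Y_d}b\,d\mu=\zeta$ from the identity $\zeta=c_\mu\int_{Y_d}b\,d\mu$. The only stylistic difference is that you split off the case $\zeta=0$ and make explicit that the derivation of \eqref{zetacmu} really only needs $\int_{Y_d}\rho\,d\mu>0$ rather than the literal hypothesis $\int_{Y_d}b\,d\mu\neq 0$; the paper's proof is terser and cites \eqref{zetacmu} directly, but rests on the same observation.
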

\begin{proof}{ of Corollary~\ref{cor.dist-orbite}}
Let $x\in Y_d$ be such that $F_x \cap \{\rho=0\} = \mbox{{\rm \O}}$.
First, by virtue of Proposition~\ref{pro.invmeas} with $g=b_i$ and $x_n=x$, any limit point derived from the asymptotics $X(t,x)/t$ as $t\to\infty$, is of the form
\[
\int_{Y_d} b(y)\, d\mu(y)\quad\mbox{for some }\mu \in\cI_b.
\]
Let us prove that the support of such an invariant probability measure $\mu$ is contained in the closed set $F_x$.
By Lemma~\ref{lem-extrac} $\mu$ is a limit point for the weak-$*$ of some sequence $(\nu_n)_{\in\N}$  in $\cM_p(Y_d)$ given by 
\[
\int_{Y_d} f(y)\, d\nu_n(y) = \frac{1}{r_n} \int_0^{r_n} f(X(s,x))\,ds\quad\mbox{for }f\in C^0_\sharp(Y_d).
\]
Let $f\in C^0_\sharp(Y_d)$ which is zero in $F_x$.
Then, since $\pi(X(s,x)) \in F_x$ for any $s\in\R$, we have
\[
\int_{Y_d} f(y)\, d\nu_n(y) = 0.
\]
Passing to the limit in the previous equality we get that for any $f\in C^0_\sharp(Y_d)$ which is zero in $F_x$,
\[
\int_{Y_d} f(y)\, d\mu(y) = 0,
\]
which means  that the support of $\mu$ is contained in $F_x$.
\par
Now, let $a$ be a limit point of $X(t,x)/t$ as $t\to\infty$.
Then, we have
\[
a=\int_{Y_d} b(y)\, d\mu(y)\quad\mbox{with}\quad\mbox{Supp}(\mu)\subset F_x\subset \{\rho>0\}.
\]
Hence, the constant $c_\mu$ of \eqref{cmu} is equal to $1$, so that by \eqref{zetacmu} we get that
\[
\int_{Y_d} b(y)\, d\mu(y) = \zeta.
\]
Therefore, any limit point $a$ is equal to the constant $\zeta$ obtained in Theorem~\ref{thm.Cbsin}, which implies that
\[
\lim_{t\r\infty} \frac{X(t,x)}{t} = \zeta.
\]
The desired implication \eqref{lim-zeta-support} is thus established.
\end{proof}
\par\bigskip
The following result provides some condition on the direction $\Phi$ of the vector field $b=\rho\,\Phi$, which allows us to specify the result of Theorem~\ref{thm.Cbsin} when the function $\rho$ vanishes.
\begin{acor}\label{cor.siPhidiv0}
Consider a vector field $b=\rho\,\Phi\in C^1_\sharp(Y_d)^d$, a sequence $(\rho_n)_{n\in\N}\in C^1_\sharp(Y_d)^\N$ and $b_n=\rho_n\,\Phi$ satisfying the assumptions of Theorem~\ref{thm.Cbsin}. Also assume that there exists a positive function $\si\in C^1_\sharp(Y_d)$ such that $\sigma\,\Phi$ is divergence free in $\R^d$, and that $\rho$ vanishes in $Y_d$.
\par\noindent
We have the following alternative involving the harmonic mean $\underline{\rho/\sigma}$ of $\rho/\sigma$:
\begin{itemize}
\item If $\underline{\rho/\sigma}=0$, then the flow $X$ associated with $b$ satisfies the asymptotics
\beq\label{asy0}
\forall\,x\in Y_d,\quad \lim_{t\r\infty} {X(t,x)\over t}=0_{\R^d}.
\eeq
\item If $\underline{\rho/\sigma}>0$, then we have
\beq\label{Cbrhosi}
\sfC_b=[0_{\R^d},\zeta]\quad\mbox{with}\quad \zeta:=\underline{\rho/\sigma}\int_{Y_d}\Phi(y)\,\sigma(y)\,dy.
\eeq
\end{itemize}
\end{acor}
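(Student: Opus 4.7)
The plan is to exploit the divergence-freeness of $\sigma\Phi$ to exhibit an explicit invariant probability measure $\mu_n\in\cI_{b_n}$ for every $n$, compute the corresponding $\zeta_n$ in closed form, and then let $n\to\infty$ with the help of Theorem~\ref{thm.Cbsin}.

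Concretely, since $\sigma>0$ and $\rho_n>0$ everywhere on $Y_d$, the ratio $\sigma/\rho_n$ belongs to $C^1_\sharp(Y_d)$ and is strictly positive. I would set
$$c_n:=\left(\int_{Y_d}\frac{\sigma(y)}{\rho_n(y)}\,dy\right)^{-1}>0\qquad\text{and}\qquad d\mu_n(y):=c_n\,\frac{\sigma(y)}{\rho_n(y)}\,dy,$$
so that $\mu_n\in\cM_p(Y_d)$. Since the vector-valued measure $b_n\,\mu_n$ equals $c_n\,\sigma\Phi\,dy$ and $\sigma\Phi$ is divergence free in $\R^d$, an integration by parts (legal on the torus since both $\sigma\Phi$ and the test function are $\Z^d$-periodic and $C^1$) shows that condition $(iii)$ of Proposition~\ref{pro.divcurl} holds, whence $\mu_n\in\cI_{b_n}$. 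The singleton hypothesis $\sfC_{b_n}=\{\zeta_n\}$ then yields
$$\zeta_n=\int_{Y_d}b_n\,d\mu_n=c_n\int_{Y_d}\Phi(y)\,\sigma(y)\,dy.$$

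The existence of $\zeta:=\lim_n\zeta_n$ is already guaranteed by Theorem~\ref{thm.Cbsin}, so it suffices to compute $\lim_n c_n$. The bound $\rho\leq\rho_n$ gives $0\leq\sigma/\rho_n\leq\sigma/\rho$ pointwise on $\{\rho>0\}$, and the uniform convergence $\rho_n\to\rho$ yields $\sigma/\rho_n\to\sigma/\rho$ pointwise on $Y_d$ (with value $+\infty$ on $\{\rho=0\}$). When $\int_{Y_d}\sigma/\rho\,dy$ is finite, the set $\{\rho=0\}$ has Lebesgue measure zero and Lebesgue's dominated convergence theorem gives $c_n\to\underline{\rho/\sigma}$; when $\int_{Y_d}\sigma/\rho\,dy=+\infty$, Fatou's lemma forces $\int_{Y_d}\sigma/\rho_n\,dy\to+\infty$, i.e.\ $c_n\to 0=\underline{\rho/\sigma}$. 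In every case,
$$\zeta=\underline{\rho/\sigma}\,\int_{Y_d}\Phi(y)\,\sigma(y)\,dy.$$

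To conclude, since $\rho$ vanishes in $Y_d$, the second alternative of Theorem~\ref{thm.Cbsin} delivers $\sfC_b=[0,\zeta]$. If $\underline{\rho/\sigma}=0$, then $\zeta=0$, so $\sfC_b=\{0\}$ and Proposition~\ref{pro.Cb} immediately yields asymptotics \eqref{asy0}. If $\underline{\rho/\sigma}>0$, the formula for $\zeta$ above is exactly \eqref{Cbrhosi}. I do not anticipate a serious obstacle: the conceptual step is the identification of $\sigma/\rho_n$ as the correct density (which turns $b_n\mu_n$ into $c_n\sigma\Phi\,dy$), while the only mildly delicate point is the passage to the limit in $c_n$, treated by the two-case analysis above.
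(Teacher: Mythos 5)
Your proof is correct and follows essentially the same route as the paper's: the same density $c_n\sigma/\rho_n$, the same identification of $\mu_n\in\cI_{b_n}$ via Proposition~\ref{pro.divcurl} and the divergence-freeness of $\sigma\Phi$, the same formula $\zeta_n=c_n\int\Phi\sigma$, and the same Fatou/dominated-convergence dichotomy to pass to the limit in $c_n$ before invoking the second alternative of Theorem~\ref{thm.Cbsin}. The only difference is cosmetic: you compute $\lim_n c_n$ in both subcases first and then apply Theorem~\ref{thm.Cbsin}, whereas the paper interleaves the two; your explicit remark that $\{\rho=0\}$ is Lebesgue-null when $\int\sigma/\rho<\infty$ is a small bonus of care.
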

\begin{proof}{ of Corollary~\ref{cor.siPhidiv0}}
Since $\sigma\,\Phi$ is divergence free in $\R^d$ and $\sigma$ is $\Z^d$-periodic, by virtue of Proposition~\ref{pro.divcurl} the probability measure on $Y_d$: $\sigma(x)/\overline{\sigma}\,dx$, where $\overline{\sigma}>0$ is the arithmetic mean of $\sigma$, is an invariant probability measure for the flow associated with the vector field $\Phi$, {\em i.e.} $\sigma(x)/\overline{\sigma}\,dx\in\cI_\Phi$.
\par\noindent
For every $n\in\N$, define the probability measure $\mu_n$ on $Y_d$ by
\beq\label{munCn}
d\mu_n(x):={C_n\over\rho_n(x)}\,\sigma(x)\,dx\quad\mbox{where}\quad C_n:=\left(\int_{Y_d}{1\over\rho_n(y)}\,\sigma(y)\,dy\right)^{-1}.
\eeq
Due to $\sigma(x)/\overline{\sigma}\,dx\in\cI_\Phi$ we have by Proposition~\ref{pro.divcurl}
\[
\forall\,\psi\in C^1_\sharp(Y_d),\quad \int_{Y_d}b_n(x)\cdot\nabla\psi(x)\,d\mu_n(x)=C_n\int_{Y_d}\Phi(x)\cdot\nabla\psi(x)\,\sigma(x)\,dx=0,
\]
which again by Proposition~\ref{pro.divcurl} implies that $\mu_n\in\cI_{b_n}$. This combined with the singleton assumption $\sfC_{b_n}=\{\zeta_n\}$ yields
\beq\label{zetanPhi}
\zeta_n=\int_{Y_d}b_n(x)\,d\mu_n(x)=C_n\int_{Y_d}\,\Phi(x)\,\sigma(x)\,dx.
\eeq
\begin{itemize}
\item If $\underline{\rho/\sigma}=0$, then by Fatou's lemma we get that
\[
\infty=\int_{Y_d}{\sigma(y)\over\rho(y)}\,dy\leq\liminf_{n\to\infty}\left(\int_{Y_d}{\sigma(y)\over\rho_n(y)}\,dy\right),
\]
which implies that the sequence $(C_n)_{n\in\N}$ tends to $0$. Hence, by \eqref{zetanPhi} the sequence $(\zeta_n)_{n\in\N}$ converges to $\zeta=0_{\R^d}$. Therefore, by the second case of Theorem~\ref{thm.Cbsin} we have $\sfC_b=\{0_{\R^d}\}$, or equivalently by \eqref{Cbsin}, the null asymptotics~\eqref{asy0} holds.
\item If $\underline{\rho/\sigma}>0$, then from the convergence of $\rho_n$ to $\rho$, the inequality $\rho_n\geq \rho$ and Lebesgue's theorem we deduce that
\[
\lim_{n\to\infty}\,\int_{Y_d}{\sigma(y)\over\rho_n(y)}\,dy=\int_{Y_d}{\sigma(y)\over\rho(y)}\,dy={1\over \underline{\rho/\sigma}}<\infty,
\]
which by \eqref{munCn}, \eqref{zetanPhi} implies that
\[
\zeta=\lim_{n\to\infty}\zeta_n=\underline{\rho/\sigma}\int_{Y_d}\Phi(y)\,\sigma(y)\,dy.
\]
Therefore, again by the second case of Theorem~\ref{thm.Cbsin} we obtain the set $\sfC_b$ \eqref{Cbrhosi}.
\end{itemize}
\end{proof}
\begin{arem}\label{rem.hrho}
In the two-dimensional case of Corollary~\ref{cor.siPhidiv0}, if $\rho$ is in $C^2_\sharp(Y_2)$ and vanishes at some point $x_0\in Y_2$, then the harmonic mean $\underline{\rho/\sigma}$ is $0$.
Indeed, since $\rho$ is non negative, $x_0$ is a critical point of $\rho$.
Hence, we get that for any $x$ close to $x_0$,
\[
\rho(x)={1\over 2}\,\nabla^2\rho(x_0)\,(x-x_0)\cdot(x-x_0)+o(|x-x_0|^2)\quad\mbox{and thus}\quad {\sigma(x)\over\rho(x)}\geq {C\over |x-x_0|^2}.
\]
which implies that $\sigma/\rho\notin L^1(Y_2)$ and $\underline{\rho/\sigma}=0$.
Therefore, the null asymptotics \eqref{asy0} holds. 
\par
Otherwise, if
\[
\#\big\{x\in Y_2:\rho(x)=0\big\}\in(0,\infty),
\]
and if for any $x_0\in Y_2$ with $\rho(x_0)=0$ we have for any $x$ close to $x_0$,
\[
\rho(x)\geq c_0\,|x-x_0|^{\alpha_0},\quad\mbox{for some }\alpha_0\in(1/2,1)\mbox{ and }c_0>0,
\]
(note that the former condition remains compatible with $\rho\in C^1_\sharp(Y_2)$), then $\underline{\rho/\sigma}>0$.
Indeed, we are led by a translation to $x_0=0_{\R^2}$, and passing to polar coordinates we deduce that any $r_0\in(0,1/2)$,
\[
\int_{\{x\in Y_2:|x|<r_0\}}{dx\over |x|^{\alpha_0}}=2\pi\int_0^{r_0}{dr\over r^{2\alpha_0-1}}<\infty.
\]
Therefore, we get the full closed line segment \eqref{Cbrhosi} for the limit set $\sfC_b$.
\par
In Example~\ref{exa.2} below we will provide a two-dimensional example of such an enlarged limit set $\sfC_b$ obtained from a sequence of singleton sets $(\sfC_{b_n})_{n\in\N}$ where $b_n=\rho_n\,\Phi$ has a fixed direction~$\Phi$.
These results also apply to Corollary~\ref{cor.CbP} replacing $1/\sigma$ by $a$.
\end{arem}
\par
The next result uses both Theorem~\ref{thm.Cbsin}, Corollary~\ref{cor.siPhidiv0} and the two-dimensional ergodic approach of~\cite{Pei}. 
\begin{acor}\label{cor.CbP}
Let $b$ be a two-dimensional vector field in $C^1_\sharp(Y_2)^2$ such that $b = \rho\,\Phi$, with $\rho$ a non zero  non negative function in $C^1_\sharp(Y_2)$, and
\beq\label{PhiaDu}
\Phi=a\,R_\perp\nabla u\;\;\mbox{in }Y_2,
\eeq
where $a$ is a positive function in $C^1_\sharp(Y_2)$, $\nabla u$ is a non vanishing gradient field in $C^1_\sharp(Y_2)^2$ and $R_\perp$ is the $-\,\pi/2$ rotation matrix of $\R^{2\times 2}$. Also assume that
\beq\label{ergDu}
\forall\,\kappa\in\Z^2\setminus\{0_{\R^2}\},\;\;\left(\int_{Y_2}\nabla u(y)\,dy\right)\cdot \kappa\neq 0.
\eeq
We have the following cases:
\begin{itemize}
\item If $\rho$ is positive in $Y_2$, then the flow $X$ associated with $b$ satisfies the asymptotics
\beq\label{asybsi}
\forall\,x\in Y_2,\quad \lim_{t\r\infty} \frac{X(t,x)}{t} = \underline{a\rho}\int_{Y_2}R_\perp\nabla u(y)\,dy.
\eeq
\item If $\rho$ vanishes in $Y_2$ and $\underline{a\rho}=0$, then the flow $X$ satisfies the null asymptotics
\beq\label{asybsi0}
\forall\,x\in Y_2,\quad \lim_{t\r\infty} \frac{X(t,x)}{t} = 0_{\R^2}.
\eeq
\item  If $\rho$ vanishes in $Y_2$ and $\underline{a\rho}>0$, then the set $\sfC_b$ is given by
\beq\label{CbPhi}
\sfC_b=[0_{\R^2},\zeta]\quad\mbox{with}\quad \zeta:=\underline{a\rho}\int_{Y_2}R_\perp\nabla u(y)\,dy\neq 0_{\R^2}.
\eeq
\end{itemize}
\end{acor}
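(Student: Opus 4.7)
The strategy is to apply Theorem~\ref{thm.Cbsin} and Corollary~\ref{cor.siPhidiv0} with the natural choice $\sigma := 1/a$, which is positive and in $C^1_\sharp(Y_2)$. With this choice, $\sigma\,\Phi = R_\perp\nabla u$ is divergence free in $\R^2$ (an immediate consequence of $\partial_1\partial_2 u = \partial_2\partial_1 u$), and the harmonic mean $\underline{\rho/\sigma}$ reduces exactly to $\underline{a\rho}$. For the approximating sequence required by Theorem~\ref{thm.Cbsin}, I would take the simple choice $\rho_n := \rho + 1/n$ and $b_n := \rho_n\,\Phi$, so that $\rho_n > 0$, $\rho_n \geq \rho$, and $\rho_n \to \rho$ uniformly in $Y_2$.

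The main technical step is to verify that $\sfC_{b_n} = \{\zeta_n\}$ for every $n\in\N$. Since $a > 0$ and $\nabla u$ does not vanish, $b_n = \rho_n\,a\,R_\perp\nabla u$ is a non-vanishing field in $C^1_\sharp(Y_2)^2$, so by \cite[Theorem~3.1]{Pei} it suffices to check that the associated flow is not periodic in $Y_2$ in the sense of \eqref{XperYd}. This is where the ergodic condition \eqref{ergDu} enters. Writing $u(x) = \big(\int_{Y_2}\nabla u(y)\,dy\big)\cdot x + u_\sharp(x)$ with $u_\sharp \in C^2_\sharp(Y_2)$, the identity $\nabla u \cdot b_n = \rho_n\,a\,(\nabla u \cdot R_\perp\nabla u) = 0$ shows that $u$ is conserved along the flow of $b_n$. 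If one had $X(T,x) = x+\kappa$ for some $T > 0$ and $\kappa\in\Z^2$, then $u$-conservation would give $\big(\int_{Y_2}\nabla u(y)\,dy\big)\cdot\kappa = u(x+\kappa) - u(x) = 0$; condition \eqref{ergDu} forces $\kappa = 0$, in which case the orbit $X(\R,x)$ would be a closed Jordan curve in the plane, contradicting the fact that the non-vanishing planar field $b_n$ admits no closed orbit by the Poincar\'e-Bendixson theorem.

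The conclusion then splits according to whether $\rho$ vanishes. If $\rho > 0$ in $Y_2$, the same non-periodicity argument applied directly to $b = \rho\,\Phi$ (itself non-vanishing) yields, via \cite[Theorem~3.1]{Pei}, that $\sfC_b = \{\zeta\}$; the value of $\zeta$ is then read off from the invariant probability measure $d\mu := (a\rho)^{-1}\,dx \big/ \int_{Y_2}(a\rho)^{-1}\,dy$, whose invariance follows from Proposition~\ref{pro.divcurl} and $\div(R_\perp\nabla u) = 0$, giving $\zeta = \underline{a\rho}\int_{Y_2}R_\perp\nabla u(y)\,dy$ and then \eqref{asybsi} by equivalence \eqref{Cbsin}. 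If $\rho$ vanishes in $Y_2$, Corollary~\ref{cor.siPhidiv0} applies directly: $\underline{a\rho} = 0$ gives $\sfC_b = \{0\}$ and hence \eqref{asybsi0}, while $\underline{a\rho} > 0$ gives $\sfC_b = [0,\zeta]$ with the same $\zeta$, which is \eqref{CbPhi}. The main obstacle is precisely the non-periodicity verification, where \eqref{ergDu} excludes any non-trivial jump $\kappa$ while the Poincar\'e-Bendixson theorem excludes the remaining case $\kappa = 0$.
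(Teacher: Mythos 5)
Your proposal is correct and follows essentially the same route as the paper's proof: the ergodic condition \eqref{ergDu} plus conservation of $u$ along the flow rules out the jump $\kappa\neq 0$, non-vanishing of $b_n$ rules out a closed orbit in $\R^2$, so Peirone's theorem applies and gives $\sfC_{b_n}=\{\zeta_n\}$; then the explicit invariant density $(a\rho_n)^{-1}$ identifies $\zeta_n$ and the conclusion follows from Theorem~\ref{thm.Cbsin} and Corollary~\ref{cor.siPhidiv0} with $\sigma=1/a$. The only cosmetic differences are that you fix $\rho_n=\rho+1/n$ rather than a generic approximating sequence, you invoke Poincar\'e--Bendixson directly in place of the preliminary remark of \cite[Theorem~3.1]{Pei} (same content), and in the case $\rho>0$ you bypass Theorem~\ref{thm.Cbsin} altogether by applying Peirone's theorem directly to the non-vanishing field $b$ and reading $\zeta$ from the invariant measure $(a\rho)^{-1}\,dx$ -- a slight but legitimate shortcut over the paper's passage through the perturbation theorem.
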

\begin{proof}{ of Corollary~\ref{cor.CbP}} Consider the sequence $(\sigma_n)_{n\in\N}$ defined by
\beq\label{sin}
\sigma_n:={C_n\over a\rho_n}\quad\mbox{where}\quad C_n=\underline{a\rho_n}:=\left(\int_{Y_2}{dy\over (a\rho_n)(y)}\right)^{-1},
\eeq
where $(\rho_n)_{n\in\N}$ is a sequence in $C^1_\sharp(Y_2)^\N$ satisfying the condition $(i)$ of Theorem~\ref{thm.Cbsin}.
\par
First, note that by definition \eqref{sin} the vector field $b_n:=\rho_n\,\Phi$ satisfies
\beq\label{sibcnDu}
\sigma_n\,b_n = {C_n\over a}\,\Phi = C_n\, R_\perp\nabla u
\eeq
(recall that $C_n$ is a positive constant) so that $b_n$ is orthogonal to $\nabla u$.
Hence, the function $u$ is invariant by the flow $X_n$ associated with $b_n$, {\em i.e.}
\beq\label{invuXn}
\forall\,x\in Y_2,\ \forall\,t\in\R,\quad u(X_n(t,x))=u(x).
\eeq
This combined with the irrationality (or ergodicity) condition \eqref{ergDu} implies that the flow $X_n$ has no periodic solution in $Y_2$ according to \eqref{XperYd}. Otherwise, there exists $x\in Y_2$, $T>0$, and $\kappa\in\Z^2$ such that $X_n(T,x)=x+\kappa$, hence it follows that
\beq\label{invuXnT}
u(x)=u(X_n(T,x))=u(x+\kappa).
\eeq
Moreover, since $\nabla u$ is $\Z^2$-periodic, the function
\[
z\;\longmapsto \;u(z)-\left(\int_{Y_2}\nabla u(y)\,dy\right)\cdot z\quad\mbox{is $\Z^2$-periodic}.
\]
This combined with \eqref{invuXnT} yields
\[
\left(\int_{Y_2}\nabla u(y)\,dy\right)\cdot \kappa=0.
\]
Hence, from the incommensurability condition of \eqref{ergDu} we deduce that $\kappa=0_{\R^2}$. The flow $X_n(\cdot, x)$ is thus $T$-periodic, namely $X_n(\R,x)$ is a closed orbit. However, by virtue of the preliminary remark of the proof of \cite[Theorem~3.1]{Pei} this leads us to a contradiction since $b_n=\rho_n\,\Phi$ is non vanishing. Therefore, the flow $X_n$ associated with $b_n$ has no periodic solution in $Y_2$. Then, using the second step of the proof of \cite[Theorem~3.1]{Pei} we get the existence of a vector $\zeta_n\in\R^2$ such that 
\beq\label{Xnzen}
\forall\,x\in Y_2,\quad \lim_{t\r\infty} \frac{X_n(t,x)}{t} = \zeta_n,
\eeq
or equivalently by \eqref{Cbsin}, $\sfC_{b_n}=\{\zeta_n\}$, namely the condition $(ii)$ of Theorem~\ref{thm.Cbsin} holds.
Moreover, due to \eqref{sibcnDu} $\sigma_n\,b_n$ is divergence free in $\R^2$, or equivalently, in the torus sense \eqref{dmub=0}
\[
\forall\,\psi\in C^1_\sharp(Y_d),\quad \int_{Y_d} b_n(x)\cdot\nabla\psi(x)\,\sigma_n(x)\,dx=0.
\]
Hence, by virtue of Proposition~\ref{pro.divcurl} $\sigma_n(x)\,dx$ is an invariant probability measure for the flow $X_n$, which combined with \eqref{sibcnDu} implies that
\beq\label{sibzen}
\int_{Y_2}\sigma_n(y)\,b_n(y)\,dy=C_n\int_{Y_2}R_\perp\nabla u(y)\,dy\in \sfC_{b_n}=\{\zeta_n\}.
\eeq
Let us conclude:
\begin{itemize}
\item If $\rho$ is positive in $Y_2$, then from equality \eqref{sibzen} and the uniform convergence of $\rho_n$ to $\rho>0$ we deduce that
\[
\lim_{n\to\infty}\zeta_n=\zeta:=C\int_{Y_2}R_\perp\nabla u(y)\,dy,\quad\mbox{where}\quad C:=\lim_{n\to\infty}C_n=\underline{a\rho}.
\]
Therefore, by the first result of Theorem~\ref{thm.Cbsin} we get that $\sfC_b=\{\zeta\}$, or equivalently by~\eqref{Cbsin}, asymptotics \eqref{asybsi} is satisfied.
\item Otherwise, $\rho$ vanishes in $Y_2$. Moreover, by \eqref{PhiaDu} the vector field $a^{-1}\Phi$ is clearly divergence free in~$\R^2$.
Therefore, by virtue of Corollary~\ref{cor.siPhidiv0} with $\sigma=a^{-1}$, we deduce the null asymptotics \eqref{asybsi0} if $\underline{a\rho}=0$,
and the set $\sfC_b$ \eqref{CbPhi} if $\underline{a\rho}>0$.
The fact that $\zeta\neq 0_{\R^2}$ in \eqref{CbPhi} follows immediately from the ergodic condition \eqref{ergDu}.
\end{itemize}
The proof is now complete.
\end{proof}
\begin{arem}\label{rem.SP}
The condition \eqref{PhiaDu} on the direction $\Phi$ of the field $b = \rho\,\Phi$ may seem to be quite restrictive at the first glance.
Actually, we can deduce \eqref{PhiaDu} from the existence of a function $v\in C^2(Y_2)$ and a constant $c>0$ satisfying the inequality
\beq\label{PhiDvc}
\Phi\cdot\nabla v\geq c\;\;\mbox{in }\R^2.
\eeq
This inequality means that the equipotential $\{v=0\}$ (or any equipotential of $v$) is transverse to each orbit $Y(\R,x)$, $x\in \R^2$, of the flow $Y$ associated with $\Phi$. In other words, the equipotential $\{v=0\}$ can be regarded as a Siegel's curve \cite[Lemma~3]{Sie} for the flow $Y$ in $\R^2$ rather than in the torus~$Y_2$. Assuming that $b$ is non vanishing and that the flow $X$ associated with $b$ has no periodic trajectory in $Y_2$ according to~\eqref{XperYd} (which is an ergodic type condition) and using a Siegel's curve in the torus, Peirone \cite[Theorem~3.1]{Pei} has proved that the asymptotics of the flow $X(\cdot,x)$ does exist for any $x\in Y_2$ and is independent of $x$, or equivalently by \eqref{Cbsin}, that $\sfC_b$ is a singleton. Therefore, condition \eqref{PhiaDu} and the ergodic condition \eqref{ergDu} play the same role for the vector field $\Phi$ than Peirone's conditions for the vector field $b$ through a similar Siegel's curve approach. However, working with the non vanishing vector field $\Phi$ rather than $b$ allows us to obtain some new asymptotics when the vector field $b = \rho\,\Phi$ does vanish with the non negative function $\rho$.
\par
Now, let us check that condition \eqref{PhiDvc} implies condition \eqref{PhiaDu} under some (minor) additional assumptions. To this end, we will follow the same procedure as \cite[Theorem~2.15]{BMT} derived for a gradient field. Since we have for any $x\in Y_2$,
\[
\forall\,t\in\R,\quad {\partial \over \partial t}\big(v(Y(t,x))\big)=(\Phi\cdot\nabla v)(Y(t,x))\geq c>0,
\]
the mapping $t\mapsto v(Y(t,x))$ is a $C^1$-diffeomorphism on $\R$, hence there exists a unique $\tau(x)\in\R$ such that
\[
v\big(Y(\tau(x),x)\big) = 0,
\]
namely the trajectory $Y(\cdot,x)$ reaches the equipotential $\{v=0\}$ at time $\tau(x)$.
The uniqueness of $\tau$ combined with the semi-group property of the flow $Y$ easily implies that
\beq\label{tauY}
\forall\,x\in Y_2,\ \forall\,t\in\R,\quad \tau(Y(t,x))=\tau(x)-t.
\eeq
Moreover, by the implicit functions theorem and the $C^1(\R\times\R^2)$ regularity of the flow $Y$, the function $\tau$ belongs to $C^1(\R^2)$.
Then, define the positive function $\sigma_0\in C^1(\R^2)$ by
\beq\label{si0}
\sigma_0(x):=\exp\left(\int_0^{\tau(x)} ({\rm div}\,\Phi)(Y(s,x))\,ds\right)\quad\mbox{for }x\in\R^2.
\eeq
From now, assume that ${\rm div}\,(\Phi)\in C^1(\R^2)$. Then, the function $\sigma_0$ belongs to $C^1(\R^2)$.
By using \eqref{tauY} and the semi-group property of the flow $Y$, then making the change of variable $r=s+t$, we get that
\[
\forall\,x\in Y_2,\ \forall\,t\in\R,\quad \sigma_0(Y(t,x))=\exp\left(\int_t^{\tau(x)} ({\rm div}\,\Phi)(Y(r,x))\,dr\right).
\]
Next, taking the derivative of the previous equality at $t=0$, we obtain that
\[
\nabla\sigma_0\cdot\Phi+\sigma_0\,{\rm div}\,(\Phi)={\rm div}\,(\sigma_0\,\Phi)=0\;\;\mbox{in }\R^2,
\]
or equivalently, there exists a function $u_0\in C^1(\R^2)$ such that
\[
\Phi={1\over\sigma_0}\,R_\perp\nabla u_0\;\;\mbox{in }\R^2.
\]
This is nearly the desired condition \eqref{PhiaDu} except that the function $\sigma_0$ is not necessarily $\Z^2$-periodic.
However, also assuming that the function $\sigma_0$ is bounded from below and above by positive constants, the averaging procedure of \cite[Theorem~2.17]{BMT} allows us to build a positive periodic function $\sigma\in L^\infty_\sharp(Y_2)$ satisfying
\[
{\rm div}(\sigma\,\Phi)=0\;\;\mbox{in }\R^2,
\]
which is equivalent to condition \eqref{PhiaDu} with $a:=1/\sigma$. But the regularity of $\sigma$ is not ensured.
Finally, to get the regularity $\sigma\in C^1_\sharp(Y_2)$, it is enough to assume in addition that $\sigma_0$ and $\nabla\sigma_0$ are uniformly continuous in~$\R^2$ (see \cite[Remark~2.19]{BMT}).
\end{arem}
\par
In contrast with Corollary~\ref{cor.CbP} the following result uses both Theorem~\ref{thm.Cbsin} and the non-ergodic approach of \cite{Bri2} in any dimension, and provides an alternative approach to the two-dimensional Corollary~\ref{cor.CbP}.
\begin{acor}\label{cor.Cbdet}
For $d\geq 2$ and $n\in\N^*$, let $U_n=(u_1^n,u_2,\dots,u_d)$ be a sequence of vector fields in $C^2(Y_d)^d$ with $\nabla U_n\in C^1_\sharp(Y_d)^{d\times d}$, and let $a$ be a positive function in $C^1_\sharp(Y_d)$ such that
\beq\label{Un}
\left\{\ba{cl}
\det(\nabla U_n)>0 & \mbox{in }Y_d
\\ \ecart
\dis \rho_n:={1\over a\det(\nabla U_n)} & \mbox{converges uniformly in $Y_d$ to some }\rho\in C^1_\sharp(Y_d)
\\ \ecart
\rho\leq\rho_n &  \mbox{in }Y_d.
\ea\right.
\eeq
Then, the sequence of vector fields $(b_n)_{n\in\N}$ defined by
\beq\label{bnUn}
b_n=\rho_n\,\Phi,\quad\mbox{where}\quad \Phi:=\left\{\ba{ll}
a\,R_\perp\nabla u_2 & \mbox{if }d=2
\\ \ecart
a\,(\nabla u_2\times\cdots\times\nabla u_d) & \mbox{if }d>2
\ea\right.
\eeq
converges uniformly in $Y_d$ to the vector field $b=\rho\,\Phi$. Moreover, we have:
\begin{itemize}
\item If $\rho$ is positive in $Y_d$, then the flow $X$ associated with $b$ satisfies the asymptotics
\beq\label{asyDuk}
\forall\,x\in Y_d,\quad \lim_{t\r\infty} \frac{X(t,x)}{t}=\left\{\ba{ll}
\dis \underline{a\rho}\int_{Y_2}R_\perp\nabla u_2(y)\,dy & \mbox{if }d=2
\\ \ecart
\dis \underline{a\rho}\int_{Y_d}(\nabla u_2\times\cdots\times\nabla u_d)(y)\,dy & \mbox{if }d>2,
\ea\right.
\eeq
where $\underline{a\rho}$ is the harmonic mean of $\rho$.
\item If $\rho$ vanishes in $Y_d$ and $\underline{a\rho}=0$, then the flow $X$ associated with $b$ satisfies the asymptotics
\beq\label{asyDuk0}
\forall\,x\in Y_d,\quad \lim_{t\r\infty} \frac{X(t,x)}{t}=0_{\R^d}.
\eeq
\item If $\rho$ vanishes in $Y_d$ and $\underline{a\rho}>0$, then we get the set
\beq\label{CbDuk}
\sfC_b=[0_{\R^d},\zeta]\quad\mbox{with}\quad \zeta:=\left\{\ba{ll}
\dis \underline{a\rho}\int_{Y_2}R_\perp\nabla u_2(y)\,dy\neq 0_{\R^2} & \mbox{if }d=2
\\ \ecart
\dis \underline{a\rho}\int_{Y_d}(\nabla u_2\times\cdots\times\nabla u_d)(y)\,dy\neq 0_{\R^d} & \mbox{if }d>2.
\ea\right.
\eeq
\end{itemize}
\end{acor}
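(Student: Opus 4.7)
The plan is to reduce Corollary \ref{cor.Cbdet} to Corollary \ref{cor.siPhidiv0}, using the rectification result \cite[Corollary 4.1]{Bri2} to supply the singleton condition $\sfC_{b_n}=\{\zeta_n\}$. The first step is an algebraic identity: by Cramer's rule,
\[
\nabla U_n \cdot (\Phi/a) = \det(\nabla U_n)\, e_1 \quad \text{in } Y_d,
\]
since $\Phi/a$ is (up to sign) either $R_\perp\nabla u_2$ or $\nabla u_2\times\cdots\times\nabla u_d$, which is orthogonal to each $\nabla u_j$ for $j\geq 2$, while $\nabla u_1^n\cdot(\Phi/a) = \det(\nabla U_n)$. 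Multiplying by $\rho_n=1/(a\det\nabla U_n)$ yields the rectification condition $\nabla U_n \cdot b_n = e_1$ in $Y_d$. Combined with $\det\nabla U_n > 0$ and $\nabla U_n \in C^1_\sharp(Y_d)^{d\times d}$, this places each $U_n$ in the diffeomorphism framework of Remark \ref{rem.sta}, and \cite[Corollary 4.1]{Bri2} delivers $\sfC_{b_n}=\{\zeta_n\}$ for some $\zeta_n\in\R^d$.

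Next, set $\sigma:=1/a$, which is positive in $C^1_\sharp(Y_d)$ and for which $\sigma\Phi$ is divergence free in $\R^d$ by the classical identities $\div(R_\perp\nabla u) = 0$ and $\div(\nabla u_2\times\cdots\times\nabla u_d)=0$. The field $\Phi$ is non-vanishing: at any point where it vanished, the vectors $\nabla u_2,\ldots,\nabla u_d$ would be linearly dependent, contradicting $\det\nabla U_n>0$. Since $\rho/\sigma = a\rho$, the harmonic-mean hypothesis in Corollary \ref{cor.siPhidiv0} reads precisely $\underline{a\rho}$. All the hypotheses of Theorem \ref{thm.Cbsin} and Corollary \ref{cor.siPhidiv0} are thereby met, and following the proof of the latter one computes
\[
\zeta_n = C_n\int_{Y_d} (\Phi/a)(y)\,dy,\qquad C_n:=\Bigl(\int_{Y_d}\frac{dy}{a(y)\rho_n(y)}\Bigr)^{-1}.
\]

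It then suffices to split into three cases. If $\rho>0$, Lebesgue's theorem gives $C_n\to\underline{a\rho}>0$, so $\zeta_n$ tends to the $\zeta$ announced in \eqref{asyDuk}; the first alternative of Theorem \ref{thm.Cbsin} concludes $\sfC_b=\{\zeta\}$ with $\zeta\neq 0$, and equivalence \eqref{Cbsin} yields \eqref{asyDuk}. If $\rho$ vanishes with $\underline{a\rho}=0$, Corollary \ref{cor.siPhidiv0} gives $\sfC_b=\{0\}$ and hence \eqref{asyDuk0}. If $\rho$ vanishes with $\underline{a\rho}>0$, Corollary \ref{cor.siPhidiv0} gives $\sfC_b=[0,\zeta]$ with the announced $\zeta$. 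The main obstacle I expect is the nonvanishing of $\zeta$ in this last case: since $\zeta = \underline{a\rho}\int_{Y_d}(\Phi/a)$ and the integral is independent of $\rho$, one reduces to $\int_{Y_d}(\Phi/a)\neq 0$. In dimension two, $\int_{Y_2} R_\perp\nabla u_2 = R_\perp\int_{Y_2}\nabla u_2$, and the mean of $\nabla u_2$ cannot vanish, for otherwise the $\Z^2$-periodicity of $\nabla u_2$ would force $u_2$ itself to be $\Z^2$-periodic and to have a critical point, contradicting the non-vanishing of $\nabla u_2$ inherited from $\det\nabla U_n>0$. The analogous non-vanishing for $d\geq 3$ follows from the diffeomorphism structure underlying \cite[Corollary 4.1]{Bri2}, in which the rectification constant $\zeta_n$ is itself shown to be non-zero.
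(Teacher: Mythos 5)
Your plan coincides with the paper's: you verify the identity $b_n\cdot\nabla u_1^n = a\rho_n\det\nabla U_n = 1$ together with the fact that $(a\rho_n)^{-1}b_n = \Phi/a$ is divergence free, apply \cite[Corollary~4.1]{Bri2} to obtain $\sfC_{b_n}=\{\zeta_n\}$ with $\zeta_n = \underline{a\rho_n}\int_{Y_d}(\Phi/a)$, take $\sigma:=a^{-1}$ in Corollary~\ref{cor.siPhidiv0}, and run the three-way case split. All of that is sound, and your observation that $\Phi$ cannot vanish (otherwise $\nabla u_2,\ldots,\nabla u_d$ would be dependent, contradicting $\det\nabla U_n>0$) is a point the paper leaves implicit but which is needed to place $b=\rho\,\Phi$ in the framework of Theorem~\ref{thm.Cbsin}.

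The one place you diverge is the non-vanishing of $\zeta$ in the third case, and there you have a genuine gap for $d\geq 3$. Your $d=2$ argument is correct and is actually a pleasant alternative to the paper's: if $\int_{Y_2}\nabla u_2 = 0$ then $u_2$ would be $\Z^2$-periodic, hence attain a maximum on the compact torus and have a critical point, contradicting $\nabla u_2 \neq 0$ (which follows from $\det\nabla U_n>0$). But for $d\geq 3$ you write only that the non-vanishing ``follows from the diffeomorphism structure underlying \cite[Corollary~4.1]{Bri2}, in which $\zeta_n$ is shown to be non-zero,'' and that does not hold up. Nothing in the hypotheses makes $U_n$ a $C^2$-diffeomorphism on the torus in the sense of Remark~\ref{rem.sta}: the matrix $\int_{Y_d}\nabla U_n$ need not lie in $\Z^{d\times d}$ with unit determinant, so the conjugation argument $\sfC_{\tilde b}=A\,\sfC_b$ giving $\zeta_n = A^{-1}e_1\neq 0$ is not available. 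And the formula $\zeta_n=\underline{a\rho_n}\int_{Y_d}(\Phi/a)$ by itself does not rule out $\int_{Y_d}(\Phi/a)=0$. The paper closes this with the quasi-affinity of the determinant applied twice: first $\det\bigl(\int_{Y_d}\nabla U_n\bigr)=\int_{Y_d}\det\nabla U_n>0$, which gives $\int\nabla u_1^n\cdot\bigl(\int\nabla u_2\times\cdots\times\int\nabla u_d\bigr)>0$; and second, the quasi-affinity of the $(d-1)$-minors identifies $\int_{Y_d}(\nabla u_2\times\cdots\times\nabla u_d)$ with $\int\nabla u_2\times\cdots\times\int\nabla u_d$, which is therefore non-zero. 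You would need to supply an argument of this kind (or extend your periodicity argument, which does not generalize directly because a single $\int\nabla u_j$ being non-zero is not enough when $d\geq 3$) to complete the proof.
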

\begin{proof}{ of Corollary~\ref{cor.Cbdet}}
We have
\beq\label{bnDun1}
b_n\cdot\nabla u_1^n=\left\{\ba{ll}
a\rho_n\,\nabla u_1^n\cdot R_\perp\nabla u_2  & \mbox{if }d=2
\\ \ecart
a\rho_n\,\nabla u_1^n\cdot(\nabla u_2\times\cdots\times\nabla u_d) & \mbox{if }d>2
\ea\right\}
=a\rho_n\,\det(\nabla U_n)=1\quad\mbox{in }Y_d,
\eeq
and
\[
{1\over a\rho_n}\,b_n=\left\{\ba{ll}
R_\perp\nabla u_2  & \mbox{if }d=2
\\ \ecart
\nabla u_2\times\cdots\times\nabla u_d & \mbox{if }d>2
\ea\right.
\]
is divergence free in $\R^d$.
Hence, from \cite[Corollary 4.1]{Bri2} we deduce that $\sfC_{b_n}$ is the singleton $\{\zeta_n\}$ with
\beq\label{zetanDuk}
\zeta_n:={\dis \int_{Y_d}(a\rho_n)^{-1}(y)\,b_n(y)\,dy\over\dis \int_{Y_d}(a\rho_n)^{-1}(y)\,dy}=
\left\{\ba{ll}
\dis =\underline{a\rho_n}\int_{Y_2}R_\perp\nabla u_2(y)\,dy & \mbox{if }d=2
\\ \ecart
\dis =\underline{a\rho_n}\int_{Y_d}(\nabla u_2\times\cdots\times\nabla u_d)(y)\,dy & \mbox{if }d>2,
\ea\right.
\eeq
where $\underline{a\rho_n}$ is the harmonic mean of $a\rho_n$.
\par\smallskip\noindent
Let us conclude:
\begin{itemize}
\item If $\rho>0$ in $Y_d$, then the sequence $(a\rho_n)^{-1}$ converges uniformly to $(a\rho)^{-1}$ in $Y_d$. Therefore, by the first case of Theorem~\ref{thm.Cbsin}
combined with \eqref{zetanDuk} we get that $\sfC_b=\{\zeta\}$, or equivalently by \eqref{Cbsin}, asymptotics \eqref{asyDuk} holds.
\item Otherwise, $\rho$ vanishes in $Y_2$. Moreover, by \eqref{bnUn} the vector field $a^{-1}\Phi$ is clearly divergence free in~$\R^2$.
Therefore, by virtue of Corollary~\ref{cor.siPhidiv0} with $\sigma=a^{-1}$, we deduce the null asymptotics \eqref{asyDuk0} if $\underline{a\rho}=0$,
and the set $\sfC_b$ \eqref{CbDuk} if $\underline{a\rho}>0$.
It remains to prove that $\zeta\neq 0_{\R^d}$ in \eqref{CbDuk}. By the definition of $U_n$ and \eqref{Un} we have
\[
\int_{Y_2}\underbrace{\det(\nabla U_n)(y)}_{>0}dy=\left\{\ba{ll}
\dis \int_{Y_2}\nabla u_1^n(y)\cdot R_\perp\nabla u_2(y)\,dy>0  & \mbox{if }d=2
\\ \ecart
\dis \int_{Y_d}\nabla u_1^n(y)\cdot(\nabla u_2\times\cdots\times\nabla u_d)(y)>0 & \mbox{if }d>2.
\ea\right.
\]
Hence, from the quasi-affinity of the determinant (see, {\em e.g.}, \cite[Section~4.3.2]{Dac}), namely:
\[
\det\left(\int_{Y_2}\nabla U_n(y)\,dy\right)=\int_{Y_2}\det(\nabla U_n)(y)\,dy>0,
\]
we deduce that
\[
\left\{\ba{ll}
\dis \int_{Y_2}\nabla u_1^n\cdot\left(\int_{Y_2}R_\perp\nabla u_2\right)=\int_{Y_2}\nabla u_1^n\cdot R_\perp\nabla u_2>0 & \mbox{if }d=2
\\ \ecart
\dis \int_{Y_d}\nabla u_1^n\cdot\left(\int_{Y_d}\nabla u_2\times\cdots\times\int_{Y_d}\nabla u_d\right)
=\int_{Y_d}\nabla u_1^n\cdot(\nabla u_2\times\cdots\times\nabla u_d)>0 & \mbox{if }d>2.
\ea\right.
\]
Therefore, again using the quasi-affinity of the determinant (multiplying the second equality by any constant vector of $\R^d$ to get a determinant) we get that
\[
\left\{\ba{ll}
\dis \int_{Y_2}R_\perp\nabla u_2(y)\,dy\neq 0_{\R^2} & \mbox{if }d=2
\\ \ecart
\dis \int_{Y_d}(\nabla u_2\times\cdots\times\nabla u_d)(y)\,dy=\int_{Y_d}\nabla u_2(y)\,dy\times\cdots\times\int_{Y_d}\nabla u_d(y)\,dy\neq 0 & \mbox{if }d>2,
\ea\right.
\]
which implies that $\zeta\neq 0_{\R^d}$ in \eqref{CbDuk}.
\end{itemize}
\end{proof}
\begin{arem}\label{rem.corPdet}
In Corollary~\ref{cor.CbP} and in the two-dimensional case of Corollary~\ref{cor.Cbdet}, the vector field $b_n$ has the the same form $b_n=\rho_n\,a\,R_\perp\nabla u$.
In Corollary~\ref{cor.CbP} the function $\rho_n$ is arbitrary, while $\nabla u$ satisfies the ergodic condition~\eqref{ergDu}. On the contrary, in the two dimensional case of Corollary~\ref{cor.Cbdet} $\rho_n$ does depend on the functions $a$ and $\nabla u$ by \eqref{Un}, while $\nabla u$ is arbitrary.
Therefore, these results provide two quite different approaches on the asymptotics of the flow: an ergodic one using \cite{Pei} and a non-ergodic one using \cite{Bri2}.
\end{arem}
\section{Examples}\label{s.exa}
The following counter-example shows that Theorem~\ref{thm.Cbsin} does not extend to a vector-valued perturbation.
\begin{Aexa}\label{exa.1}
Consider the fields $b$ and $b_n$ defined by
\[
b(x)=a(x)\,e_1\quad\mbox{and}\quad b_n(x)=a_n(x)\,(e_1+\gamma_n e_2),\qquad \mbox{for }x\in Y_2\mbox{ and }n\in\N,
\]
where $a$ is a non negative function in $C^1_\sharp(Y_2)$, $a_n:=a+1/n$, and $(\gamma_n)_{n\in\N}$ is a positive sequence in $\R\setminus\Q$ which converges to $0$.
In this case, the flows associated with the vector fields $b$ and $b_n$ can be computed explicitly.
\par
On the one hand, by the ergodic case of \cite[Section~3.1]{Tas} with the irrational rotation number~$\gamma_n$, the flow $X_n$ associated with the field $b_n$ satisfies the asymptotics
\[
\forall\, x\in Y_2,\quad\lim_{t\to\infty}\,{X_n(t,x)\over t}=\underline{a_n}\,(e_1+\gamma_n\,e_2),
\]
where $\underline{a_n}$ is the harmonic mean of $a_n$, or equivalently by \eqref{Cbsin},
\beq\label{zetanan}
\sfC_{b_n}=\{\zeta_n\}\quad\mbox{with}\quad \zeta_n:=\underline{a_n}\,(e_1+\gamma_n\,e_2).
\eeq
\par
On the other hand, by the non-ergodic case of \cite[Section~3.1]{Tas} with the rational rotation number $0$ and its extension when $a$ vanishes in $Y_2$, the flow $X$ associated with the field $b$ satisfies the asymptotics
\[
\forall\,x\in Y_2,\quad \lim_{t\to\infty} \frac{X(t,x)}{t}=\left\{\ba{cl}
\dis \underline{a(\cdot \,e_1+x)}\,e_1 & \mbox{if $a(\cdot \,e_1+x)$ is positive in }Y_2
\\ \ecart
0 & \mbox{if $a(\cdot \,e_1+x)$ vanishes in }Y_2,
\ea\right.
\]
where $\underline{a(\cdot \,e_1+x)}$ is the harmonic mean defined by
\[
\underline{a(\cdot \,e_1+x)}:=\left(\int_{Y_1}{dt\over a(t\,e_1+x)}\right)^{-1}\quad\mbox{for }x\in Y_2,
\]
or equivalently, the set $\sfA_b$ \eqref{Ab} is given by
\[
\sfA_b=\left\{\underline{a(\cdot \,e_1+x)}\,e_1:x\in Y_2\right\}.
\]
Moreover, the function $\big(x\mapsto\underline{a(\cdot \,e_1+x)}\big)$ is continuous on the compact set~$Y_2$.
It is clear at any point $x\in Y_2$ such that $a(\cdot \,e_1+x)$ is positive. Otherwise, if $a(\cdot \,e_1+x)\in C^1_\sharp(Y_1)$ vanishes in~$Y_1$, by Fatou's lemma we get that for any sequence $(x_n)_{n\in\N}$ converging to $x$,
\[
\infty={1\over \underline{a(\cdot \,e_1+x)}}\leq \liminf_{n\to\infty}\left({1\over \underline{a(\cdot \,e_1+x_n)}}\right)
=\lim_{n\to\infty}\left({1\over \underline{a(\cdot \,e_1+x_n)}}\right)=\infty.
\]
Hence, the set $\sfA_b$ is actually a closed line segment of $\R^2$, which by \eqref{ACb} implies that
\beq\label{Cb=Ab}
\sfC_b={\rm conv}(\sfA_b)=\sfA_b=\left\{\underline{a(\cdot \,e_1+x)}\,e_1:x\in Y_2\right\}.
\eeq
\par\noindent
In particular, when $a$ vanishes in $Y_2$, we get that
\[
\sfC_b=[0_{\R^2},\zeta]\quad\mbox{with}\quad \zeta:=\left(\max_{x\in Y_2}\,\underline{a(\cdot \,e_1+x)}\right)e_1.
\]
Therefore, taking into account \eqref{zetanan}, contrary to the second case of Theorem~\ref{thm.Cbsin} we may have
\[
\underline{a}\,e_1=\lim_{n\to\infty}\zeta_n\neq \zeta=\left(\max_{x\in Y_2}\,\underline{a(\cdot \,e_1+x)}\right)e_1.
\]
For example, take
\[
a(x):=\sin^2(\pi x_1)+\sin^2(\pi x_2)\quad\mbox{for }x=(x_1,x_2)\in\R^2.
\]
Then, it follows that
\[
\ba{rll}
\dis {1\over \underline{a}}=\int_{Y_1}\left(\int_{Y_1}{dx_2\over \sin^2(\pi x_1)+\sin^2(\pi x_2)}\right)dx_1 & > &
\dis \int_{Y_1}{dx_1\over \sin^2(\pi x_1)+1}
\\ \ecart
(\mbox{for any $x_1\in Y_1$ and $x_2={1\over 2}$}) & = & \dis \min_{x\in Y_2}\left(\int_{Y_1}{dt\over \sin^2(\pi t+\pi x_1)+\sin^2(\pi x_2)}\right)
\\ \ecart
& = & \dis \min_{x\in Y_2}\left({1\over \underline{a(\cdot \,e_1+x)}}\right),
\ea
\]
which implies that
\[
\underline{a}<\max_{x\in Y_2}\,\underline{a(\cdot \,e_1+x)}.
\]
Therefore, Theorem~\ref{thm.Cbsin} does not extend in general to the case where the direction $\Phi$ of the field $b_n=\rho_n\,\Phi$ also depends on $n$.
\par
Finally, note that the inclusion $\{0_{\R^2},\zeta\}\subset \sfA_b$ of the second case of Theorem~\ref{thm.Cbsin} is not in general an equality, since in the particular case \eqref{Cb=Ab} $\sfA_b$ is the closed line segment $[0_{\R^2},\zeta]$.
\end{Aexa}
\par\medskip
The second example shows that the singleton condition is not in general asymptotically preserved under the assumptions of  Theorem~\ref{thm.Cbsin}.
\begin{Aexa}\label{exa.2}
Let $\nabla u\in C^1_\sharp(Y_2)^2$ be satisfying the ergodic condition \eqref{ergDu}, let $(\rho_n)_{n\in\N}$ be the sequence of positive functions in $C^1_\sharp(Y_2)$ defined by
\[
\rho_n(x):=\left(\sin^2(\pi x_1)+\sin^2(\pi x_2)+1/n\right)^{\al}\quad\mbox{for }x\in Y_2,\quad\mbox{with }\al\in(1/2,1).
\]
and let $(b_n)_{n\in\N}$ be the sequence of vector fields defined by $b_n=\rho_n\,R_\perp\nabla u$. Since the function $(t\mapsto t^\alpha)$ is uniformly continuous in $[0,\infty)$, the sequence $(\rho_n)_{n\in\N}$ converges uniformly in $Y_d$ to the function
\[
\rho(x):=\left(\sin^2(\pi x_1)+\sin^2(\pi x_2)\right)^{\al}\quad\mbox{for }x=(x_1,x_2)\in Y_2,
\]
which belongs to $C^1_\sharp(Y_2)$ due to $\al>1/2$, and vanishes at the sole point $(0,0)$ in the torus $Y_2$.
Moreover, we have for any $x$ close to $(0,0)$,
\[
{c^{-1}\over |x|^{2\alpha}}\leq{1\over \rho(x)}\leq {c\over |x|^{2\alpha}},\quad\mbox{for some }c>1,
\]
so that $\underline{\rho}>0$ due to $\alpha<1$ (see Remark~\ref{rem.hrho}).
Therefore, by the asymptotics \eqref{CbPhi} of Corollary~\ref{cor.CbP} and condition \eqref{ergDu} we get that
\[
\sfC_b=[0,\zeta]\quad\mbox{with}\quad
\zeta:=\left(\int_{Y_2}{dx\over\left(\sin^2(\pi x_1)+\sin^2(\pi x_2)\right)^{\al}}\right)^{-1}\kern -.4em\int_{Y_d}R_\perp\nabla u(y)\,dy\neq 0.
\]
\par
Note that, if $\al\geq 1$, then $\underline{\rho}=0$. Therefore, by Corollary~\ref{cor.CbP} we get that $\sfC_b=\{0_{\R^2}\}$, and the flow $X$ satisfies the null asymptotics \eqref{asybsi0}.
\end{Aexa}
\par\medskip
The third example illustrates Corollary~\ref{cor.Cbdet}.
\begin{Aexa}\label{exa.3}
Let $U_n=(u_1^n,u_2,\dots,u_d)\in C^2(Y_d)^{d}$, $d\geq 2$ and $n\geq 1$, be such that $\nabla U_n$ is $\Z^d$-periodic, the functions $u_2,\dots,u_d$ only depend on the variables $x'=(x_2,\dots,x_d)$,
\[
u_1^n(x):=\int_0^{x_1}{dt\over f_n(t,x')}\quad\mbox{and}\quad\Delta(x'):=\det\left(\left[{\partial u_i\over\partial x_j}(x')\right]_{2\leq i,j\leq d}\right)>0
\quad\mbox{for }x\in Y_d,
\]
where $(f_n)_{n\in\N}$ is a positive sequence in $C^1_\sharp(Y_d)^\N$ which converges uniformly to $f\leq f_n$ in $Y_d$.
Expanding the determinant with respect to its first column we have
\[
\forall\,x\in Y_d,\quad\det(\nabla U_n)(x)={\Delta(x')\over f_n(x)}>0\quad\mbox{and}\quad \rho_n(x):={1\over \det(\nabla U_n)(x)}\to \rho(x):={f(x)\over \Delta(x')}\leq\rho_n(x)
\]
uniformly in $Y_d$, so that condition \eqref{Un} is fulfilled with $a=1$.
Define the vector field $b_n$ in $C^1_\sharp(Y_d)^d$ by \eqref{bnUn}.
Therefore, the sequence $(b_n)_{n\in\N}$ converges uniformly in $Y_d$ to the function $b$ given by
\[
b(x)=\left\{\ba{ll}
\dis {f(x)\over \Delta(x')}\,R_\perp\nabla u_2(x) & \mbox{if }d=2
\\ \ecart
\dis {f(x)\over \Delta(x')}\,(\nabla u_2\times\cdots\times \nabla u_d)(x) & \mbox{if }d>2,
\ea\right.
\quad\mbox{for }x\in Y_d.
\]
Moreover, due to the $1$-periodicity of $\nabla_{x'}\,u_1^n$ with respect to the variable $x_1$, we have
\[
\forall\,x'\in\R^{d-1},\quad \int_0^1\nabla_{x'}\left({1\over f_n(t,x')}\right)dt=0,
\]
which implies the existence of a positive constant $c_n$ such that
\beq\label{cn}
\forall\,x'\in\R^{d-1},\quad \int_0^1{dt\over f_n(t,x')}=c_n.
\eeq
Hence, from inequality $f\leq f_n$ and Fatou's lemma we deduce that
\[
\forall\,x'\in\R^{d-1},\quad \limsup_{n\to\infty}c_n\leq \int_0^1{dt\over f(t,x')}\leq\liminf_{n\to\infty}\int_0^1{dt\over f_n(t,x')}=\liminf_{n\to\infty}c_n,
\]
which implies that
\beq\label{fncn}
\forall\,x'\in\R^{d-1},\quad\int_0^1{dt\over f(t,x')}= \lim_{n\to\infty}c_n.
\eeq
Then, we have the following alternative:
\begin{itemize}
\item If $f$ is positive in $Y_d$, then by virtue of the first case of Corollary~\ref{cor.Cbdet} we obtain the asymptotics of the flow associated with $b$
\[
\forall\,x\in Y_d,\quad \lim_{t\r\infty} \frac{X(t,x)}{t}=\left\{\ba{ll}
\dis \underline{\rho}\int_{Y_2}R_\perp\nabla u_2(y)\,dy & \mbox{if }d=2
\\ \ecart
\dis \underline{\rho}\int_{Y_d}(\nabla u_2\times\cdots\times\nabla u_d)(y)\,dy & \mbox{if }d>2,
\ea\right.
\]
where
\[
\underline{\rho}=\left(\int_{Y_d}{\Delta(x')\over f(x)}\,dx\right)^{-1}.
\]
\item If $f$ vanishes at some point $x_0\in Y_d$, then since $f(\cdot,x_0')$ is in $C^1_\sharp(Y_1)$ and vanishes at $t=(x_0)_1$, we have
by \eqref{fncn}
\beq\label{hf}
\forall\,x'\in\R^{d-1},\quad \lim_{n\to\infty}c_n=\lim_{n\to\infty}\int_0^1{dt\over f_n(t,x')}=\int_0^1{dt\over f(t,x_0')}=\infty.
\eeq
Assume by contradiction that $\underline{f}>0$.
Then, using successively Lebesgue's theorem with inequality $f\leq f_n$, Fubini's theorem and equality \eqref{cn}, we get that
\[
\infty>\int_{Y_d}{dx\over f(x)}=\lim_{n\to\infty}\int_{Y_d}{dx\over f_n(x)}=\lim_{n\to\infty}\left[\int_{Y_{d-1}}dx'\left(\int_0^1{dt\over f_n(t,x')}\right)\right]=\lim_{n\to\infty}c_n,
\]
which contradicts \eqref{hf}.
Hence, we deduce that $\underline{f}=0$, and due to the positivity of $\Delta$ we get that
\[
\underline{\rho}=\left(\int_{Y_d}{\Delta(x')\over f(x)}\,dx\right)^{-1}=0.
\]
Therefore, by virtue of the second case of Corollary~\ref{cor.Cbdet} we obtain the null asymptotics~\eqref{asyDuk0}.
\end{itemize}
Note that the third case of Corollary~\ref{cor.Cbdet} cannot arise when the functions $u_2,\dots,u_d$ are independent of the variable $x_1$.
\end{Aexa}
\par
The fourth example deals with the case of an electric field. It is based on the divergence-curl Proposition~\ref{pro.divcurl}, and illustrates the framework of Theorem~\ref{thm.Cbsin}. We cannot characterize precisely the set $\sfC_b$ except in the two-dimensional ergodic case.
However, the two-dimensional case and the three-dimensional case are shown to be quite different.
\begin{Aexa}\label{exa.4}
Let $\sigma\in C^3_\sharp(Y_d)$ be a positive function with $\int_{Y_d}\sigma(y)\,dy=1$. Consider the vector-valued function $U\in C^2(\R^d)^d$ (see, {\em e.g.}, \cite[Theorem~8.13]{GiTr}) unique  solution (up to an additive constant vector) to the conductivity problem
 \beq\label{U}
 \left\{\ba{ll}
{\rm Div}(\sigma DU)=0_{\R^d} & \mbox{in }\R^d
 \\ \ecart
 y\mapsto U(y)-y & \mbox{is $\Z^d$-periodic},
 \ea\right.
 \eeq
where $DU=(\nabla U)^T$ and the vector-valued operator ${\rm Div}$ consists in the divergence of the columns of $\sigma DU$.
The variational formulation of \eqref{U} reads as
\beq\label{DU}
DU\in C^1_\sharp(Y_d)^d\quad\mbox{and}\quad\forall\,\Psi\in C^1_\sharp(Y_d)^d,\;\;\int_{Y_d}\sigma(y) DU(y):D\Psi(y)\,dy=0.
\eeq
In \eqref{DU} $``:"$ denotes the scalar product in $\R^{d\times d}$ defined by
\[
M:N:={\rm tr}(M^TN)\quad\mbox{for }M,N\in\R^{d\times d}.
\]
The so-called homogenized matrix (see, {\em e.g.}, \cite[Chapter~I,~Section~2.3]{BLP}) associated with the conductivity $\sigma$ is defined by
 \beq\label{A*}
 A^*:=\int_{Y_d}\sigma(y)\,DU(y)\,dy,
 \eeq
 which is known to be symmetric positive definite.
 Also define the associated electric field
 \beq\label{bDula}
 b_\lambda:=\nabla u_\lambda=\nabla(U\lambda)=DU\lambda\quad\mbox{for }\lambda\in\R^d\setminus\{0_{\R^d}\}.
 \eeq
 \par\noindent
 {\it Case $d=2$:}
 Alessandrini and Nesi \cite[Theorems~1,2]{AlNe} have proved that $U$ is a $C^1$-diffeomorphism of $\R^2$ with
 \beq\label{detDU}
 \det(DU)>0\quad\mbox{in }Y_2.
 \eeq
Let $\la\in\R^2\setminus\{0_{\R^2}\}$. As a consequence (see \cite[Proposition~2]{AlNe}) the gradient field $b_\lambda$ defined by \eqref{bDula} does not vanish in $Y_2$. Hence, the set of invariant probability measures $\cI_{b_\lambda}$ for the flow associated with $b_\lambda$ does not contain any Dirac measure.
\par
Moreover, by \eqref{DU} and  \eqref{bDula} we have
\beq\label{blasi}
\forall\,\psi\in C^1_\sharp(Y_2),\quad \int_{Y_2}b_\lambda(y)\cdot\nabla\psi(y)\,\sigma(y)\,dy
=\int_{Y_2}\sigma(y)\nabla u_\lambda(y)\cdot\nabla\psi(y)\,dy=0,
\eeq
which by virtue of the equivalence $(i)$-$(iii)$ of Proposition~\ref{pro.divcurl} implies that $\sigma(x)\,dx$ is an invariant probability measure with positive density.
Therefore, by definition~\eqref{A*} we can only conclude that (recall that $A^*$ is positive definite and $\la$ is non zero)
\beq\label{A*la}
A^*\lambda=\int_{Y_2}b_\lambda(y)\,\sigma(y)\,dy\in \sfC_{b_\lambda}.
\eeq
On the other hand, let $\mu\in\cI_{b_\lambda}$ be an invariant probability measure for the flow $X_\lambda$ associated with $b_\lambda$.
By the divergence-curl relation \eqref{divcurl} and \eqref{U} we have
\beq\label{Dula}
\int_{Y_2}|b_\lambda(y)|^2\,\mu(dy)=\int_{Y_2}b_\lambda(y)\cdot\nabla u_\lambda\,d\mu(y)
=\left(\int_{Y_2}b_\lambda(y)\,d\mu(y)\right)\cdot\lambda.
\eeq
Due to inequality \eqref{detDU} the gradient field $b_\lambda=DU\lambda$ does not vanish in $Y_2$.
Hence, since $\mu$ is a probability measure, we deduce that $\mu(\{b_\lambda\neq 0\})>0$, which implies that the first term of \eqref{Dula} is positive.
Therefore, we get that
\[
0_{\R^2}\neq\int_{Y_2}b_\lambda(y)\,d\mu(y)\in \sfC_{b_\lambda}.
\]
This combined with \eqref{A*la} yields
\beq\label{Cbla2}
\{A^*\lambda\}\subset \sfC_{b_\lambda}\subset\R^2\setminus\{0_{\R^2}\}.
\eeq
We may improve the former result under the extra ergodic condition
\beq\label{ergA*la}
\forall\,\kappa\in\Z^2\setminus\{0_{\R^2}\},\quad (A^*\lambda)\cdot\kappa\neq 0.
\eeq
Indeed, since by \eqref{blasi} $\sigma\,b_\lambda$ is divergence free, by a classical duality argument there exists a potential $v_\lambda$ with $\nabla v_\lambda\in C^1_\sharp(Y_2)^2$, such that
$\sigma\,b_\lambda=R_\perp\nabla v_\lambda$ in $Y_2$.
Moreover, since by \eqref{A*la}
\[
\int_{Y_2}\nabla v_\lambda(y)\,dy=-\,R_\perp\int_{Y_2}\sigma(y)\,b_\lambda(y)\,dy=-\,R_\perp A^*\lambda,
\]
condition \eqref{ergA*la} means that the gradient field $\nabla v_\lambda$ satisfies the ergodic condition \eqref{ergDu}.
Hence, the vector field $b_\lambda=1/\sigma\,R_\perp\nabla v_\lambda$ satisfies the first result of Corollary~\ref{cor.CbP} with $u:=v_\lambda$, $a:=1/\sigma$ and $\rho:=1$.
Therefore, the flow $X_\lambda$ associated with $b_\lambda$ satisfies asymptotics~\eqref{asybsi} which reads as
\[
\forall\,x\in Y_2,\quad \lim_{t\r\infty} \frac{X_\lambda(t,x)}{t}=\underline{1/\sigma}\int_{Y_2}R_\perp\nabla v(y)\,dy=A^*\lambda,
\]
or equivalently by \eqref{Cbsin}, we obtain that
\beq\label{ergCbla2}
\sfC_{b_\lambda}=\{A^*\lambda\}.
\eeq
\noindent
 {\it Case $d=3$:} Contrary to the two-dimensional case with the positivity \eqref{detDU}, by virtue of \cite[Theorem~4.1]{BMN} there exists a positive conductivity $\sigma_\gamma\in L^\infty_\sharp(Y_3)$ such that the vector-valued function $U_\gamma$ solution to the equation \eqref{U} with $\sigma_\gamma$ has a determinant which changes sign.
\par
More precisely, the conductivity of \cite{BMN} rescaled by its $Y_3$-average value denoted by $\sigma_\gamma$ (recall that $\sigma_\gamma(x)\,dx$ has to be a probability measure) takes two values: $\sigma_\gamma=1$ in a cubic symmetric lattice of interlocking rings which do not intersect, and $\sigma_\gamma=\gamma\ll 1$ elsewhere in $\R^3$. The conductivity $\sigma_\gamma$ is thus not regular. However, enlarging each ring with a width $d_\gamma\ll 1$, we can build a new conductivity $\sigma\in C^3_\sharp(Y_3)$ (also depending on $\gamma$) whose values pass from $1$ on the boundary of each ring to $\gamma$ on the boundary of the corresponding enlarged ring. Then, it is easy to check that for $\gamma$ and $d_\gamma$ small enough, the matrix-valued fonction $DU$ defined by \eqref{U} with the regular conductivity $\sigma$ has a determinant which also changes sign.
Thus, by a continuity continuity argument we get that
\beq\label{detDU=0}
\exists\,y_0\in Y_3,\quad \det(DU)(y_0)=0,
\eeq
which implies that there exists $\lambda\in\R^3\setminus\{0_{\R^3}\}$ such that $DU(y_0)\lambda=0_{\R^3}$.
Hence, the gradient field $b_\lambda$ defined by \eqref{bDula} vanishes at point $y_0$.
Therefore, in contrast with the two-dimensional result \eqref{A*la} we obtain the more complete result
\beq\label{Ala}
A^*\lambda=\int_{Y_3}b_\lambda(y)\,\sigma(y)\,dy\in \sfC_{b_\lambda}\setminus\{0_{\R^3}\}\quad\mbox{and}\quad [0_{\R^3},A^*\lambda]\subset \sfC_{b_\lambda},
\eeq
since the Dirac mass $\delta_{y_0}$ belongs to $\cI_{b_\delta}$ and
\[
\int_{Y_3}b_\lambda(y)\,d\delta_{y_0}(dy)=b_\lambda(y_0)=0_{\R^3}.
\]
Result \eqref{Ala} corresponds to the second case of Theorem~\ref{thm.Cbsin}.
In contrast with the result \eqref{Cbla2} of the two-dimensional case, we obtain that
\beq\label{Cbla3}
[0_{\R^3},A^*\lambda]\subset \sfC_{b_\lambda}.
\eeq
\end{Aexa}
\section{Homogenization of linear transport equations}\label{s.hom}
The following theorem is an extension of various homogenization results \cite{Bre,Gol1,Gol2,HoXi,Tas} (and the references therein) of linear transport equations with an oscillating velocity, which are based on the classical ergodic approach. Here, in a regular and periodic framework the ergodic approach is replaced by the singleton approach of Section~\ref{ss.appsin}, whose a very particular case has been first obtained in \cite[Corollary~4.4]{Bri1}.
\begin{atheo}\label{thm.hom}
Let $b$ be a vector field in $C^1_\sharp(Y_d)^d$ and let $u_0\in C^1(\R^d)$.
Consider the transport equation with the oscillating velocity $b(x/\ep)$:
\beq\label{eque}
\left\{\ba{ll}
\dis {\partial u_\ep\over\partial t}-b(x/\ep)\cdot\nabla u_\ep=0 & \mbox{in }(0,\infty)\times\R^d
\\ \ecart
u_\ep(0,x)=u_0(x) & \mbox{for }x\in\R^d.
\ea\right.
\eeq
Assume that there exists a vector $\zeta\in\R^d$ such that
\beq\label{asyze}
\forall\,x\in Y_d,\quad\lim_{t\to\infty}\,{X(t,x)\over t}=\zeta,
\eeq
where $X$ is the flow~\eqref{X} associated with the vector field $b$.
Then, the solution $u_\ep$ to transport equation \eqref{eque} converges strongly in $L^p_{\rm loc}([0,\infty)\times\R^d)$ for any $p\in[1,\infty)$, to $u_0(x+t\,\zeta)$ which is solution to the transport equation \eqref{eque} with the constant velocity $\zeta$ in place of $b(x/\ep)$.
\end{atheo}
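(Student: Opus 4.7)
My plan is to combine the method of characteristics with a uniform (in the starting point) version of the singleton asymptotics~\eqref{asyze} on the torus. First, since $b\in C^1_\sharp(Y_d)^d$, the mapping $\Phi_\ep(t,x):=\ep\,X(t/\ep,\,x/\ep)$ is $C^1$ on $\R\times\R^d$ and satisfies $\partial_t\Phi_\ep=b(x/\ep)\cdot\nabla_x\Phi_\ep$, as one checks from $\partial_sX=b(X)$ together with the classical identity $\nabla_y X(s,y)\,b(y)=b(X(s,y))$ (both sides solve the same linear ODE with common initial value $b(y)$). Consequently, the unique $C^1$ solution to \eqref{eque} is given explicitly by
\[
u_\ep(t,x)=u_0\big(\ep\,X(t/\ep,\,x/\ep)\big)\quad\mbox{for }(t,x)\in[0,\infty)\times\R^d.
\]

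Next, writing $x/\ep=\lfloor x/\ep\rfloor+\{x/\ep\}$ with $\{x/\ep\}\in[0,1)^d\simeq Y_d$ and using the $\Z^d$-equivariance \eqref{XxperY}, I would rewrite
\[
\ep\,X(t/\ep,\,x/\ep)=x-\ep\,\{x/\ep\}+\ep\,X\big(t/\ep,\,\{x/\ep\}\big),
\]
where $\ep\,\{x/\ep\}\to 0$ as $\ep\to 0$, uniformly in $x\in\R^d$. The whole task then reduces to controlling the last term uniformly in $x$ and $t$.

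The heart of the proof — and the main obstacle — is to upgrade the pointwise limit \eqref{asyze} to a limit that is \emph{uniform} in the starting point: for every $\de>0$ there exists $s_0\geq 1$ such that $|X(s,y)/s-\zeta|<\de$ for all $s\geq s_0$ and $y\in Y_d$. I would establish this by contradiction. If it fails, one extracts $\de>0$, $s_n\to\infty$ and $y_n\in Y_d$ with $|X(s_n,y_n)/s_n-\zeta|\geq\de$. Writing $X(s_n,y_n)/s_n=(1/s_n)\int_0^{s_n}b(X(s,y_n))\,ds$ and using the compactness of $Y_d$ and of $\cM_p(Y_d)$, Proposition~\ref{pro.invmeas} (applied coordinate-wise to $b$) produces an invariant probability measure $\mu\in\cI_b$ such that $\int_{Y_d}b\,d\mu$ is a limit point of $X(s_n,y_n)/s_n$, hence an element of $\sfC_b\setminus\{\zeta\}$; this contradicts the singleton condition $\sfC_b=\{\zeta\}$ ensured by \eqref{asyze} via Proposition~\ref{pro.Cb}.

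Combining this uniform convergence on $[s_0,\infty)\times Y_d$ with the crude bound $|\ep\,X(t/\ep,y)|\leq t\,\|b\|_\infty$ valid on the complementary range $t/\ep<s_0$ (which shrinks with $\ep$ to the set $\{t=0\}$), I obtain $\ep\,X(t/\ep,\{x/\ep\})\to t\,\zeta$ uniformly on every compact subset of $[0,\infty)\times\R^d$. Together with $\ep\{x/\ep\}\to 0$ and the continuity of $u_0$, this yields $u_\ep(t,x)\to u_0(x+t\zeta)$ uniformly on every compact set; the strong convergence in $L^p_{\rm loc}([0,\infty)\times\R^d)$ for every $p\in[1,\infty)$ follows immediately, and the limit clearly solves the transport equation with constant velocity $\zeta$.
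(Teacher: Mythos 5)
Your proof is correct, but it takes a genuinely different route from the paper's. The paper establishes only the pointwise convergence $X_\ep(t,x)\to x+t\zeta$, by applying Proposition~\ref{pro.invmeas} directly with the (divergent) starting points $x_n=x/\ep_n$ and the times $t_n=t/\ep_n$, and then passes to $L^p_{\rm loc}$ convergence via the uniform bound $|b(X(r/\ep,x/\ep))|\leq\|b\|_\infty$ and Lebesgue's dominated convergence theorem. You instead upgrade the singleton asymptotics to a statement that is \emph{uniform} in the starting point on the torus — that $\sup_{y\in Y_d}|X(s,y)/s-\zeta|\to 0$ as $s\to\infty$ — by a compactness/contradiction argument which rests on exactly the same Proposition~\ref{pro.invmeas} (with the varying points $y_n$ and the observation that $y_n/s_n\to 0$), and you combine this with the $\Z^d$-equivariance \eqref{XxperY} to fold $x/\ep$ back onto $Y_d$. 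This gives the stronger intermediate conclusion that $u_\ep\to u_0(\cdot+t\,\zeta)$ uniformly on compact subsets of $[0,\infty)\times\R^d$, from which $L^p_{\rm loc}$ convergence is immediate. The trade-off is transparent: the paper's argument is shorter since it skips the uniformization, while yours isolates a reusable uniform Birkhoff-type estimate on the compact torus that is implied by the singleton condition $\sfC_b=\{\zeta\}$ (a condition weaker than unique ergodicity, yet sufficient here because only the observable $g=b$ is involved). The auxiliary identity $\nabla_yX(s,y)\,b(y)=b(X(s,y))$ you quote to verify the characteristics representation is standard and correctly justified by uniqueness for the linearized flow ODE.
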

\begin{proof}{ of Theorem~\ref{thm.hom}}
Let $X$ be the flow associated to the vector field $b$. By $\ep$-rescaling the flow $X$, let us define the flow $X_\ep$ associated with the oscillating vector field $b(x/\ep)$ by
\beq\label{Xep}
\forall\,(t,x)\in (0,\infty)\times Y_d,\quad X_\ep(t,x):=\ep\,X\left({t\over\ep},{x\over\ep}\right)
=x+\ep\int_0^\frac{t}\varepsilon b\left(X\big(s,\frac{x}{\varepsilon}\big)\right)ds.
\eeq
Taking into account the regularity conditions the characteristics method induced by the flow $X_\ep$ implies that the solution $u_\ep$ to \eqref{eque} is given by
\beq\label{uepb}
\forall\,(t,x)\in [0,\infty)\times Y_d,\quad u_\ep(t,x)=u_0(X_\ep(t,x))=u_0\left(x+\ep\int_0^\frac{t}\varepsilon b\left(X\big(s,\frac{x}{\varepsilon}\big)\right)ds\right).
\eeq
On the other hand, let $(\varepsilon_n)_{n\in\N}$ be a positive sequence converging to $0$. Let $(t,x)\in (0,\infty)\times Y_d$, set $t_n :=t/\varepsilon_n$ and $x_n:=x/\varepsilon_n$. Then, by virtue of Proposition~\ref{pro.invmeas} the limit points of the sequence
\[
v_n := \varepsilon_n \int_0^{\frac{t}{\varepsilon_n}} b\left(X\big(s,\frac{x}{\varepsilon_n}\big)\right)ds =t\times \frac{1}{t_n} \int_0^{t_n} b(X(s,x_n))\,ds
\]
belong to $t\,\sfC_b$. However, by \eqref{asyze} combined with equivalence \eqref{Cbsin} we have $\sfC_b=\{\zeta\}$. Hence, for any positive sequence $(\varepsilon_n)_{n\in\N}$ converging to $0$, the whole sequence $(v_n)_{n\in\N}$ converges to $t\,\zeta$, which combined with \eqref{Xep} implies that
\beq\label{conXep}
\forall\,(t,x)\in (0,\infty)\times Y_d,\quad \lim_{\ep\to 0}\,X_\ep(t,x)=x+t\,\zeta.
\eeq
Moreover, making the change of variable $r=\ep\,s$ in \eqref{Xep} we have
\[
\forall\,(t,x)\in [0,\infty)\times Y_d,\quad X_\ep(t,x)=x+\int_0^t b\left(X\big(\frac{r}{\varepsilon},\frac{x}{\varepsilon}\big)\right)dr.
\]
This combined with the boundedness of $b$ and Lebesgue's theorem implies that the pointwise convergence \eqref{conXep} of $X_\ep$ holds actually in $L^p_{\rm loc}([0,\infty)\times\R^d)$ for any $p\in[1,\infty)$.
Therefore, by the expression \eqref{uepb} with $u_0\in C^1(\R^d)$, $u_\ep(t,x)$ converges strongly in $L^p_{\rm loc}([0,\infty)\times\R^d)$ for any $p\in[1,\infty)$, to the function $u_0(x+t\,\zeta)$, which concludes the proof.
\end{proof}
\appendix
\section{Derivation of invariant probability measures}
Let $T_t$ for $t\in\R$, be the mapping from $C^0_\sharp(Y_d)$ into itself defined by
\beq\label{Tt}
(T_t f)(x):=f\big(X(t,x)\big)\quad\mbox{for }f\in C^0_\sharp(Y_d)\mbox{ and }x\in Y_d.
\eeq
When a flow preserves the set of the continuous functions on a compact metric space, the existence of an invariant probability measure  for the flow is a classical statement which can be derived thanks to a weak compactness argument applied to sequences of probability measures defined from the Birkhoff time averages in (1.5) (see, {\em e.g.}, \cite[Theorem~1, Section~1.8]{CFS} in the discrete time case).
The following result adapts this statement restricting it to the limit points of the Birkhoff time averages for a given fixed function, adding possible variations of the spatial parameter $x$ in the averages.
\begin{apro}\label{pro.invmeas}
Let $b\in C^1_\sharp(Y_d)^d$. There exists an invariant {probability measure on $Y_d$} for the flow $X$ \eqref{X} associated with $b$.
Moreover, let $g\in C^0_\sharp(Y_d)$, let $(x_n)_{n\in\N}\in(\R^d)^\N$, and let $(t_n)_{n\in\N}\in\R^\N$ be such that $\lim_n t_n = \infty$. Then, for any limit point $a$ of the sequence $(u_n)_{n\in\N}\in\R^\N$ defined by
\beq\label{ung}
u_n := \frac{1}{t_n}\int_0^{t_n} g(X(s,x_n))\,ds,\quad n\in\N,
\eeq
there exists a probability measure $\mu\in \cM_p(Y_d)$ (depending {\em a priori} on $(x_n)_{n\in\N}$ and $g$) which is invariant for the flow $X$ and which satisfies
\beq\label{afmu}
a=\int_{Y_d} g(y)\, d\mu(y).
\eeq
\end{apro}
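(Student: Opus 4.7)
The plan is to apply the classical Krylov--Bogolyubov construction adapted to the continuous-time flow on the compact torus $Y_d$. For each $n\in\N$ I would define the Birkhoff time-average probability measure $\mu_n\in\cM_p(Y_d)$ by
$$
\int_{Y_d} f(y)\,d\mu_n(y) := \frac{1}{t_n}\int_0^{t_n} f(X(s,x_n))\,ds \qquad \mbox{for } f\in C^0_\sharp(Y_d).
$$
This right-hand side is a positive linear functional on $C^0(Y_d)$ that sends the constant $1$ to $1$, hence by the Riesz representation theorem it is a probability measure on $Y_d$; note that the $\Z^d$-periodicity \eqref{XxperY} of $X(s,\cdot)$ combined with that of $f$ makes the definition independent of the chosen representative of $x_n$ in $\R^d/\Z^d$. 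Since $Y_d$ is a compact metric space, $\cM_p(Y_d)$ is weakly-$*$ compact (Banach--Alaoglu), so some subsequence $(\mu_{n_k})_k$ converges weakly-$*$ to a probability measure $\mu\in\cM_p(Y_d)$.

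Next I would establish that $\mu\in\cI_b$. Fix $\tau\in\R$ and $\psi\in C^0_\sharp(Y_d)$. The function $y\mapsto \psi(X(\tau,y))$ belongs to $C^0_\sharp(Y_d)$ by continuity of $X(\tau,\cdot)$ together with \eqref{XxperY} and the periodicity of $\psi$. Using the semi-group property \eqref{sgroup}, $X(\tau,X(s,x_{n_k}))=X(\tau+s,x_{n_k})$, so that
$$
\int_{Y_d}\psi(X(\tau,y))\,d\mu_{n_k}(y)=\frac{1}{t_{n_k}}\int_{\tau}^{t_{n_k}+\tau}\psi(X(s,x_{n_k}))\,ds.
$$
Comparing this with $\int_{Y_d}\psi\,d\mu_{n_k}=\frac{1}{t_{n_k}}\int_0^{t_{n_k}}\psi(X(s,x_{n_k}))\,ds$, the difference is bounded in absolute value by $2|\tau|\,\|\psi\|_{\infty}/t_{n_k}\to 0$. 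Passing to the limit using weak-$*$ convergence on both integrals yields $\int_{Y_d}\psi(X(\tau,y))\,d\mu(y)=\int_{Y_d}\psi(y)\,d\mu(y)$, which is exactly the invariance property \eqref{invmu}.

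Finally, for the second statement, given a limit point $a$ of $(u_n)_{n\in\N}$ defined by \eqref{ung}, I would first extract a subsequence along which $u_{n_k}\to a$, then refine it so that $\mu_{n_k}\rightharpoonup\mu$ weakly-$*$ in $\cM_p(Y_d)$. The previous step gives $\mu\in\cI_b$, and since $g\in C^0_\sharp(Y_d)$, the weak-$*$ convergence yields
$$
a=\lim_{k\to\infty}u_{n_k}=\lim_{k\to\infty}\int_{Y_d} g(y)\,d\mu_{n_k}(y)=\int_{Y_d} g(y)\,d\mu(y),
$$
which is \eqref{afmu}. The existence assertion of the proposition is a particular case, obtained by applying this construction to any fixed sequence $(x_n)$, any $g\in C^0_\sharp(Y_d)$, and any $t_n\to\infty$. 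The only non-routine point is the shift-in-time estimate for invariance; once the bound $2|\tau|\,\|\psi\|_\infty/t_{n_k}$ is in place the rest is a standard compactness argument.
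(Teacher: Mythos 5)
Your proof is correct and follows essentially the same route as the paper: the paper's Lemma~A.2 is precisely your Krylov--Bogolyubov argument (weak-$*$ compactness of the Birkhoff time averages plus the shift-in-time $O(|\tau|\,\|\psi\|_\infty/t_{n_k})$ tail estimate for invariance), and the proposition is then deduced exactly as you do, by first passing to a subsequence along which $u_{n_k}\to a$ and then refining for weak-$*$ convergence of the averaged measures.
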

\begin{proof}{ of Proposition~\ref{pro.invmeas}}
We will use the following result which is proved below.
\begin{alem} \label{lem-extrac}
Let $(y_n)_{n\in\N}\in(\R^d)^\N$, let $(r_n)_{n\in\N}\in\R^\N$ be such that $\lim_n r_n = \infty$, and let $\nu_n$, $n\in\N$, be the probability measure defined by
\beq\label{nun}
\int_{Y_d} f(y)\, d\nu_n(y) = \frac{1}{r_n} \int_0^{r_n} f(X(s,y_n))\,ds\quad\mbox{for }f\in C^0_\sharp(Y_d).
\eeq
Then, there exists a subsequence $(\nu_{n_k})_{k\in\N}$ of $(\nu_n)_{n\in\N}$ which converges weakly~$*$ to some probability measure $\mu\in \cM_p(Y_d)$ which is invariant for the flow $X$. 
\end{alem}
\par
Let $a$ be a limit point of the sequence $(u_n)_{n\in\N}$ \eqref{ung}, namely 
\[
a = \lim_{n\to\infty}\,\frac{1}{t_{\theta(n)}}\int_0^{t_{\theta(n)}} g(X(t,x_{\theta(n)}))\,dt,
\]
for some strictly increasing sequence $(\theta(n))_{n\in\N}$ of integer numbers. 
Set $r_n := t_{\theta(n)}$, $y_n:= x_{\theta(n)}$, and consider the associated sequence $(\nu_n)_{n\in\N}$ of probability measures on $Y_d$ given by \eqref{nun}. 
By Lemma~\ref{lem-extrac} we can extract a subsequence $(\nu_{n_k})_{k\in\N}$ which converges weakly~$*$ to some invariant probability measure $\mu\in \cI_b$ for the flow $X$.
We thus have
\[
\forall\,f\in C^0_\sharp(Y_d),\quad \lim_{k\to\infty} \int_{Y_d} f(y)\, d\nu_{n_k}(y) = \int_{Y_d} f(y)\, d\mu(y),
\]
which implies in particular that
\[
a = \lim_{k\to\infty}\frac{1}{t_{\theta(n_k)}}\int_0^{t_{\theta(n_k)}} g(X(s,x_{\theta(n_k)}))\,ds
= \lim_{k\to\infty} \int_{Y_d} g(y)\, d\nu_{n_k}(y) = \int_{Y_d} g(y)\, d\mu(y).
\]
\end{proof}
\noindent
\begin{proof}{ of Lemma~\ref{lem-extrac}}
Since $Y_d$ is a compact metrizable space, there exists a subsequence $(\nu_{n_k})_{k\in\N}$ of $(\nu_n)_{n\in\N}$ which converges weakly~$*$ to some probability measure $\mu\in \cM_p(Y_d)$, namely for any $f\in C^0_\sharp(Y_d)$,
\beq\label{wcnunk}
\int_{Y_d} f(y)\, d\nu_{n_k}(y) = \frac{1}{r_{n_k}} \int_0^{r_{n_k}} f(X(s,y_{n_k}))\,ds\;\mathop{\longrightarrow}_{k\to\infty}\;\int_{Y_d} f(y)\, d\mu(y).
\eeq
Let us prove that $\mu$ is invariant for the flow $X$. For the sake of simplicity denote $\tau_k:= r_{n_k}$, $z_k:=y_{n_k}$ and $\mu_k:= \nu_{n_k}$.
Let $t\in\R$ and $f\in C^0_\sharp(Y_d)$.
By the semi-group property of the flow \eqref{sgroup} we have
\[
\int_{Y_d} (T_t f)(y)\, d\mu_k(y) =  \frac{1}{\tau_k} \int_0^{\tau_k} f(X(s+t,z_k))\,ds.
\]
By the change of variable $r=s+t$, it follows that 
\[
\ba{l}
\dis \int_{Y_d} (T_t f)(y)\, d\mu_k(y) =\frac{1}{\tau_k} \int_{t}^{t+\tau_k} f(X(r,z_k))\,dr
\\ \ecart
\dis =\frac{1}{\tau_k} \int_{0}^{\tau_k} f(X(r,z_k))\,dr + \frac{1}{\tau_k} \int_{\tau_k}^{t+\tau_k} f(X(r,z_k))\,dr- \frac{1}{\tau_k} \int_{0}^{t} f(X(r,z_k))\,dr
\ea
\]
Since $f$ is bounded and $t\in\R$ is fixed, we deduce from \eqref{wcnunk} that
\[
\lim_{k\to \infty} \int_{Y_d} (T_t f)(y)\, d\mu_k(y) = \int_{Y_d} f(y)\, d\mu(y).
\]
However, by the definition of $\mu$ we also have
\[
\lim_{k\to \infty} \int_{Y_d} (T_t f)(y)\, d\mu_k(y) = \int_{Y_d} (T_t f)(y)\, d\mu(y).
\]
Hence, we get that
\[
\forall\,t\in\R,\ \forall\,f\in C^0_\sharp(Y_d),\quad \int_{Y_d} (T_t f)(y)\, d\mu(y) = \int_{Y_d} f(y)\, d\mu(y),
\]
which implies that $\mu$ is invariant for the flow $X$.
\end{proof}


\begin{thebibliography}{20}

\bibitem{AlNe}{\sc G.~Alessandrini \& V.Nesi}: ``Univalent $\sigma$-harmonic mappings'', {\em Arch. Rational Mech. Anal.}, {\bf 158} (2001), 155-171.

\bibitem{AHZ1}{\sc Y.~Amirat, K.~Hamdache \& A.~Ziani}: ``Homog\'en\'eisation d'\'equations hyperboliques du premier ordre et application aux \'ecoulements miscibles en milieu poreux'' (French) [Homogenization of a system of first-order hyperbolic equations and application to miscible flows in a porous medium], {\em Ann. Inst. H. Poincar\'e Anal. Non Lin\'eaire}, {\bf 6} (5) (1989), 397-417.

\bibitem{AHZ2}{\sc Y.~Amirat, K.~Hamdache \& A.~Ziani}: ``Homog\'en\'eisation par d\'ecomposition en fr\'equences d'une \'equation de transport dans $\R^n$'' (French) [Homogenization by decomposition into frequencies of a transport equation in $\R^n$], {\em C. R. Acad. Sci. Paris S\'er. I Math.}, {\bf 312} (1) (1991), 37-40.

\bibitem{AHZ3}{\sc Y.~Amirat, K.~Hamdache \& A.~Ziani}: ``Homogenisation of parametrised families of hyperbolic problems'', {\em Proc. Roy. Soc. Edinburgh Sect. A}, {\bf 120} 3-4 (1992), 199-221.

\bibitem{BLP}{\sc A.~Bensoussan, A., J.-L~Lions \& G.~Papanicolaou}: {\em Asymptotic Analysis for Periodic Structures}, Studies in Mathematics and its Applications {\bf 5}, North-Holland Publishing Co., Amsterdam-New York, 1978, 700 pp.

\bibitem{Bre}{\sc Y.~Brenier}: ``Remarks on some linear hyperbolic equations with oscillatory coefficients'', {\em Proceedings of the Third International Conference on Hyperbolic Problems} (Uppsala 1990) Vol. I, II, Studentlitteratur, Lund (1991), 119-130.

\bibitem{Bri1}{\sc M.~Briane:} ``Isotropic realizability of a strain field for the two-dimensional incompressible elasticity system'', {\em Inverse Problems}, {\bf 32} (6) (2016), 22~pp.

\bibitem{Bri2}{\sc M.~Briane}: ``Isotropic realizability of fields and reconstruction of invariant measures under positivity properties. Asymptotics of the flow by a nonergodic approach'', {\em SIAM J. App. Dyn. Sys.}, {\bf 18} (4) (2019), 1846-1866.

\bibitem{Bri3}{\sc M.~Briane}: ``Homogenization of linear transport equations. A new approach,'', {\em J. \'Ecole Polytechnique - Math\'ematiques}, {\bf 7} (2020), 479-495.

\bibitem{BMN} {\sc M.~Briane, G.W.~Milton \& V.~Nesi}~: ``Change of sign of the corrector's determinant for homogenization in three-dimensional conductivity'', {\em Arch. Rat. Mech. Anal.}, {\bf 173} (1) (2004), 133-150.

\bibitem{BMT} {\sc M.~Briane, G.W.~Milton et A.~Treibergs}~: ``Which electric fields are realizable in conducting materials?'', {\em ESAIM: Math. Model. Numer. Anal.}, {\bf 48} (2) (2014), 307-323.

\bibitem{Cac}{\sc R.~Caccioppoli}: ``Sugli elementi uniti delle trasformazioni funzionali: un teorema di esistenza e unicit?ed alcune sue applicazioni'', {\em Rend. Sem. Mat. Padova}, {\bf 3} (1932), 1-15.

\bibitem{CFS}{\sc I.P.~Cornfeld, S.V.~Fomin \& Ya.G.~Sina\u{\i}}: {\em Ergodic Theory}, translated from the Russian by A.B.~Sosinskii, Grundlehren der Mathematischen Wissenschaften [Fundamental Principles of Mathematical Sciences] {\bf 245}, Springer-Verlag, New York, 1982, 486 pp.

\bibitem{FrMi}{\sc J.~Franks \& M.~Misiurewicz}: ``Rotation sets of toral flows", {\em Proc. Amer. Math. Soc.}, {\bf 109} (1) (1990), 243-249.

\bibitem{Dac}{\sc B.~Dacorogna:} {\em Direct Methods in the Calculus of Variations}, Second Edition, Applied Mathematical Sciences {\bf 78}, Springer, New York, 2008, 619 pp.

\bibitem{GiTr}{\sc D.~Gilbarg \& N.S.~Trudinger}: {\em Elliptic Partial Differential Equations of Second Order}, Reprint of the 1998 edition, Classics in Mathematics, Springer-Verlag, Berlin, 2001, 531 pp.

\bibitem{Gol1}{\sc F.~Golse}: ``Moyennisation des champs de vecteurs et EDP'' (French), [The averaging of vector fields and PDEs], {\em Journ\'ees \'Equations aux D\'eriv\'ees Partielles}, Saint Jean de Monts 1990, Exp. no. {\bf XVI}, \'Ecole Polytech. Palaiseau, 1990, 17 pp.

\bibitem{Gol2}{\sc F.~Golse}: ``Perturbations de syst\`emes dynamiques et moyennisation en vitesse des EDP'' (French), [On perturbations of dynamical systems and the velocity averaging method for PDEs], {\em C. R. Acad. Sci. Paris S\'er. I Math.}, {\bf 314} (2) (1992), 115-120.

\bibitem{Her}{\sc M.R.~Herman}: ``Existence et non existence de tores invariants par des diff\'eomorphismes symplectiques'' (French), [Existence and nonexistence of tori invariant under symplectic diffeomorphisms], {\em S\'eminaire sur les \'Equations aux D\'eriv\'ees Partielles 1987-1988}, {\bf XIV}, \'Ecole Polytech. Palaiseau, 1988, 24 pp.

\bibitem{HSD}{\sc M.W.~Hirsch, S.~Smale \& R.L.~Devaney}: {\em Differential equations, Dynamical Systems, and an Introduction to Chaos}, Second edition, {\em Pure and Applied Mathematics} {\bf 60}, Elsevier Academic Press, Amsterdam, 2004, 417 pp.

\bibitem{HoXi}{\sc T.Y.~Hou \& X.~Xin}: ``Homogenization of linear transport equations with oscillatory vector fields'', {\em SIAM J. Appl. Math.}, {\bf 52} (1) (1992), 34-45.

\bibitem{JaTz}{\sc P.-E.~Jabin, A.-E.~Tzavaras}: ``Kinetic decomposition for periodic homogenization problems", {\em SIAM J. Math. Anal.}, {\bf 41} (1) (2009), 360-390.

\bibitem{Kol}{\sc A.N.~Kolmogorov}: ``On dynamical systems with an integral invariant on the torus'' (Russian), {\em Doklady Akad. Nauk SSSR (N.S.)}, {\bf 93} (1953), 763-766.

\bibitem{LlMa}{\sc J.~Llibre \& R.S.~MacKay}: ``Rotation vectors and entropy for homeomorphisms of the torus isotopic to the identity'', {\em Ergodic Theory Dynam. Systems}, {\bf 11} (1) (1991), 115-128.

\bibitem{MiZi}{\sc M.~Misiurewicz \& K.~Ziemian}: ``Rotation sets for maps of tori'', {\em J. London Math. Soc. (2)}, {\bf 40} (3) (1989), 490-506.

\bibitem{Pei}{\sc R.~Peirone}: ``Convergence of solutions of linear transport equations'', {\em Ergodic Theory Dynam. Systems}, {\bf 23} (3) (2003), 919-933.

\bibitem{Sie}{\sc C.L.~Siegel}: ``Note on Differential Equations on the Torus'', {\em Annals of Mathematics}, {\bf 46} (3) (1945), pp. 423-428.

\bibitem{Sin}{\em Dynamical Systems II, Ergodic Theory with Applications to Dynamical Systems and Statistical Mechanics} (Translated from the Russian), Edited by E.Ya Sina\u{\i}, Encyclopaedia of Mathematical Sciences {\bf 2}, Springer-Verlag Berlin 1989, 281 pp.

\bibitem{Tas}{\sc T.~Tassa}: ``Homogenization of two-dimensional linear flows with integral invariance'', {\em SIAM J. Appl. Math.}, {\bf 57} (5) (1997), 1390-1405.

\bibitem{Tar}{\sc L.~Tartar}: ``Nonlocal effects induced by homogenization'', {\em Partial Differential Equations and the Calculus of Variations Vol.~II}, F. Colombini et al. (eds.), 925-938, Progr. Nonlinear Differential Equations Appl. {\bf 2}, Birkh\"auser Boston, Boston, MA, 1989.
\end{thebibliography}
\end{document}